\def\R{\mathbb{R}}
\def\Z{\mathbb{Z}}
\renewcommand{\L}{\mathrm{L}}
\newtheorem{theorem}{Theorem}
\newtheorem{lemma}[theorem]{Lemma}
\newtheorem{corollary}[theorem]{Corollary}
\theoremstyle{definition}
\newtheorem{remark}[theorem]{Remark}
\DeclareFontFamily{U}{mathx}{\hyphenchar\font45}
\DeclareFontShape{U}{mathx}{m}{n}{
<5> <6> <7> <8> <9> <10>
<10.95> <12> <14.4> <17.28> <20.74> <24.88>
mathx10
}{}
\DeclareSymbolFont{mathx}{U}{mathx}{m}{n}
\DeclareMathAccent{\widecheck}{0}{mathx}{"71}
\numberwithin{equation}{section}
\begin{document}
\title{Boundedness of   some multi-parameter fiber-wise  multiplier operators}

\author{Fr{\'e}d{\'e}ric Bernicot}
\address{Fr{\'e}d{\'e}ric Bernicot, CNRS - Universit{\'e} de Nantes, Laboratoire Jean Leray, 2 rue de la Houssini{\'e}re, 44322 Nantes cedex 3, France}
\email{frederic.bernicot@univ-nantes.fr}

\author{Polona Durcik}
\address{Polona Durcik, California Institute of Technology, 1200 E California Blvd, Pasadena CA 91125,~USA}
\email{durcik@caltech.edu}

\date{\today}

\subjclass[]{Primary 42B15; Secondary 42B20.}

\begin{abstract}
We prove  $\L^p$ estimates for various multi-parameter bi- and trilinear  operators with symbols  acting on fibers of the  two-dimensional functions.
 In particular, this yields estimates for the general bi-parameter form of the twisted paraproduct   
 studied in \cite{vk:tp}.
\end{abstract}

\maketitle

\section{Introduction}
The classical Coifman-Meyer theorem  \cite{cm1,cm2}  is concerned with    bilinear   operators    of the form
\begin{align*} 
T_m(F_1,F_2)(x)= \int_{\R^{2n}} \widehat{F_1}(\xi) \widehat{F_2}(\eta) m(\xi,\eta) e^{2\pi i x\cdot (\xi+\eta)} d\xi d\eta,
\end{align*}
defined for test functions $F_1,F_2:\R^n\rightarrow \mathbb{C}$ and $m$ a bounded function on $\R^{2n}$. 
The Coifman-Meyer theorem  states that if   $m$, in addition, 
  satisfies 
\begin{align}
\label{cmsym}
|\partial_\xi^\alpha  \partial_\eta^\beta m(\xi,\eta)| \leq C   |(\xi,\eta)|^{-|\alpha|-|\beta|}
\end{align}
for all multi-indices $\alpha,\beta\in \mathbb{N}^n_0$ up to a sufficently large finite order and all $(\xi,\eta)\neq 0$, with   $0\leq C <\infty$, then the     operator $T_m$ maps $\L^{p_1}\times \L^{p_2}$ to $\L^{p_3}$ whenever $1< p_1,p_2\leq \infty$, $1/2<p_3<\infty$, and $1/p_1+1/p_2=1/p_3$. A notable application of this result is to the fractional Leibniz rule by Kato and Ponce \cite{KP}, which has further applications to nonlinear PDE; see for instance the work by Christ and Weinstein \cite{CW}.
 
Multi-parameter variants of the Coifman-Meyer theory arise by   considering multilinear operators with symbols behaving as tensor  products of   symbols \eqref{cmsym}.
A simple  bi-parameter example  can be obtained by considering the operator $T_m$ with  
\begin{align}
\label{mult-m-product}
m(\xi,\eta) =m_1(\xi)m_2(\eta),
\end{align}
 where $m_1$ and $m_2$ are smooth away from the origin and satisfy the analogous  estimates as in \eqref{cmsym}. This case immediately  splits into a pointwise product of two linear Calder{\'o}n-Zygmund operators. It can be observed that the symbol $m$ in \eqref{mult-m-product} satisfies  
 \begin{align}
 \label{sym-m-prod}
 |\partial_\xi^\alpha  \partial_\eta^\beta m(\xi,\eta)| \leq C  |\xi|^{-|\alpha|} |\eta|^{-|\beta|}.
 \end{align}
 In contrast to this example, a major early contribution to the theory of bi-parameter operators    by Grafakos and Kalton \cite{GK} states that the condition \eqref{sym-m-prod}   is in general not sufficient for the $\L^p$ boundedness of  $T_m$.

Further developments of the multi-parameter theory were driven by interest in obtaining various fractional Leibniz-type rules such as in the works by Muscalu, Pipher, Tao, Thiele \cite{mptt:bi,mptt:multi}. In particular,  these  papers   show bounds  for the      operators $T_m$ with symbols satisfying  
\begin{align}
\label{mptt-sym}
|\partial_{(\xi_1,\eta_1)}^\alpha  \partial_{(\xi_2,\eta_2)}^\beta m(\xi,\eta)| \leq C  |(\xi_1,\eta_1)|^{-|\alpha|} |(\xi_2,\eta_2)|^{-|\beta|}	,
\end{align}
where  $\xi=(\xi_1,\xi_2),\,\eta=(\eta_1,\eta_2)\in \R^{n}\times \R^n$.
More general operators with   so-called flag singularities were  studied by Muscalu \cite{Mus:flag1, Mus:flag2}. 
Some  recent works in the area include the one  by Muscalu and Zhai \cite{Zhai-Muscalu}, who  investigate a certain trilinear operator which falls under the class of singular Brascamp-Lieb integrals with non-H\"older scaling, and some related   flag paraproducts studied by Lu, Pipher, and Zhang \cite{lu-pipher-zhang}. 
 
In the aforementioned  papers, the  symbols in  question 
generalize  products of  Coifman-Meyer symbols \eqref{cmsym}, each symbol acting on 
one or several fibers of the input functions. For instance,   the fiber-wise action in  \eqref{mptt-sym}    means that the first  factor on the right-hand side of \eqref{mptt-sym} concerns only   $F_1(\cdot,\xi_2)$ and $F_2(\cdot,\eta_2)$, while the second term concerns only the complementary  fibers $F_1(\xi_1,\cdot)$ and $F_2(\eta_1,\cdot)$.  
 Several recent developments in the theory of  singular integrals  include   the study of multilinear  operators
with    symbols acting fiber-wise on the input functions but with an additional "twist" as compared to \eqref{mptt-sym}, such as a symbol  acting on $F_1(\xi_1,\cdot)$ and  $F_2(\cdot,\eta_2)$,  
or $F_1(\cdot,\xi_2)$ and $F_2(\eta_1,\cdot)$.
A fiber-wise action of this kind was  first studied by Kova\v{c} \cite{vk:tp} and the first author \cite{bernicot:fw} in the one-parameter setting  \eqref{cmsym} and dimension $n=2$. In this paper we address such a  situation  in the multi-parameter setting.

We follow customary practice to model symbols by the convolution-type $P-Q$  operators, see for instance  \cite{vk:tp} or \cite{mtt:unifpara}.
For $k\in\Z$ and $1\leq i\leq 2$, let $\varphi_{i,k}$ and $\psi_{i,k}$ be   smooth  functions adapted in the interval  $[-2^{k+1},2^{k+1}]$, and
let ${\psi_{i,k}}$ vanish on $[-2^{k-100}, 2^{k-100}]$. 
A     function $\rho$  adapted to an interval  $I\subseteq \R$   means a function supported in $I$ and satisfying 
$$\|\partial^\alpha \rho\|_\infty \leq |I|^{-\alpha}$$ for all multi-indices $\alpha$  up to order $N$ for some large $N$; see    \cite{stein}.
Let  $P_{i,k}$ and  $Q_{i,k}$     denote  the one-dimensional Fourier multipliers with  symbols $\varphi_{i,k}$ and $\psi_{i,k}$ respectively, i.e.  
\begin{align*}
P_{i,k} f = f * \widecheck{\varphi}_{i,k}, \quad 
Q_{i,k} f = f * \widecheck{\psi}_{i,k}
\end{align*}
for $f\in \L^{1}_{\textup{loc}}(\R)$.
 When we apply such operators to one-dimensional fibers of a two-dimensional function we use a superscript to denote the fiber on which the action takes place. For instance, 
$$ P_{i,k}^{(1)} F(x_1,x_2) = \big( F(\cdot, x_2) * \widecheck{\varphi}_{i,k}\big ) (x_1), \quad P_{i,k}^{(2)} F(x_1,x_2) = \big( F(x_1,\cdot) * \widecheck{\varphi}_{i,k} \big  ) (x_2),$$
and similarly for  $Q_{i,k}^{(1)} F$ and  $Q_{i,k}^{(2)} F$.
The central objects of this paper are the two-dimensional  bi-parameter bilinear operators
\begin{align*}
& T_1(F_1, F_2)(x) =\sum_{(k,l)\in \Z^2: k\leq l} (Q^{(1)}_{1,k} P_{2,l}^{(2)}F_1)(x)   \, (Q_{2,l}^{(1)}P_{1,k}^{(2)} F_2)(x)\qquad\textup{and}\\
& T_2(F_1,F_2)(x) =\sum_{(k,l)\in \Z^2:k\leq l} (P^{(1)}_{1,k} P_{2,l}^{(2)}F_1)(x)   \, (Q_{2,l}^{(1)}Q_{1,k}^{(2)} F_2)(x),
\end{align*}
defined for test functions $F_1,F_2:\R^2\rightarrow \mathbb{C}$.  
We prove the following bounds in Section \ref{sec:thm1}.
\begin{theorem}
\label{thm:doubletwist} 
The operators $T_1$ and $T_2$
are bounded from $\L^{p_1}(\R^2) \times \L^{p_2}(\R^2)$ to $\L^{p'_3}(\R^2)$ whenever     $1<p_1,p_2<\infty$, $1<p_3'<2$, and $\frac{1}{p_1} + \frac{1}{p_2} = \frac{1}{p_3'}$.
\end{theorem}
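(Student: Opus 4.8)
The plan is to pass to the dual trilinear form, split off the part that is governed by classical multi-parameter paraproduct theory, and treat the genuinely entangled remainder with the one-parameter twisted paraproduct technology. By duality, the asserted bound is equivalent to the trilinear estimate $|\Lambda_j(F_1,F_2,F_3)|\lesssim\|F_1\|_{p_1}\|F_2\|_{p_2}\|F_3\|_{p_3}$ for the form $\Lambda_j(F_1,F_2,F_3):=\int_{\R^2}T_j(F_1,F_2)\,F_3$, in the range $\tfrac1{p_1}+\tfrac1{p_2}+\tfrac1{p_3}=1$ with $1<p_1,p_2<\infty$ and $2<p_3<\infty$. The structure to exploit is that in each $T_j$ the scale $k$ links one fiber of $F_1$ to the complementary fiber of $F_2$ through a family-$1$ pair $P/Q$, while $l$ links the remaining two fibers through a family-$2$ pair, under the constraint $k\le l$; thus $\Lambda_j$ resembles an iteration of two one-parameter twisted paraproducts tied together by the scale ordering, with $F_3$ --- which carries no smoothing operator --- playing the role of the unprojected input.

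The first step is to resolve the ordering $k\le l$ by summing in $l$ first and expanding the coarse scale-$l$ frequency cut-off sitting on $F_1$ into Littlewood--Paley pieces, and then to perform a Coifman--Meyer-type frequency decomposition, inserting auxiliary Littlewood--Paley families on $F_3$ along both fibers. This writes $\Lambda_j$ as a finite sum of pieces of two kinds. In the pieces in which every frequency variable of every input is pinned to one of finitely many scales, the form becomes an ordinary multi-parameter paraproduct --- of flag type, since some inputs end up merely low-passed along a fiber --- and is bounded in the range needed here by \cite{mptt:bi,Mus:flag1}. The remaining piece is a single model form $M_j$ in which two inputs stay low-passed at a common scale along one fiber, and this carries all of the entanglement. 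For $T_2$ one checks additionally that the diagonal contribution $k=l$, with both $Q$'s landing on $F_2$, is an ordinary two-dimensional paraproduct, in contrast to $T_1$, where the two $Q$'s split between the inputs.

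To bound the model form $M_j$ I would run the twisted paraproduct argument of Kova\v{c} \cite{vk:tp} (or the variant from \cite{bernicot:fw}): apply Cauchy--Schwarz in one fiber to decouple the $k$- and $l$-sums, control the resulting square functions by Littlewood--Paley theory, and finish with the local-$\L^2$ (Bellman function / stopping-time) estimate applied fiberwise. This is the step at which the hypothesis $p_3>2$ is used, through the square-function estimate on the $F_3$-fiber. Interpolating the resulting bound for $M_j$ with the full-range bounds for the paraproduct pieces then yields the stated open range of exponents.

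I expect the main obstacle to be the two-dimensional twist itself: since $k$ and $l$ live on opposite fibers of $F_1$ and of $F_2$, no single variable carries the cancellation of both $Q$-families, so the Cauchy--Schwarz step has to be arranged so as to separate $k$ from $l$ \emph{while preserving} $k\le l$, and the off-diagonal decay when $k\ll l$ must be made quantitative and then summed. I also anticipate that $T_2$ requires somewhat different bookkeeping from $T_1$, because of the different placement of the $P$'s and $Q$'s, even though the same circle of ideas applies to both.
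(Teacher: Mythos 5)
Your outline has the right general shape (dualize, separate the diagonal $k\sim l$ from $k\ll l$, note that $p_3>2$ enters through the square-function estimate on the unprojected function $F_3$, and finish with twisted-paraproduct technology), but the two steps that would constitute the actual proof are either unjustified or missing. First, the claim that the ``pinned'' pieces become ordinary multi-parameter or flag paraproducts bounded by \cite{mptt:bi,Mus:flag1} is not substantiated and is doubtful: the defining feature of $T_1,T_2$ is the twisted pairing of fibers ($\xi_1$ with $\eta_2$, $\xi_2$ with $\eta_1$), and neither the bi-parameter paraproducts of \cite{mptt:bi} nor the flag paraproducts cover this structure --- indeed even the one-parameter operator \eqref{op:tp} falls outside their scope, which is why \cite{vk:tp} exists. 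Any piece of your decomposition that retains both scale parameters acting in the twisted way is not a standard paraproduct, so this reduction does not dispose of anything beyond the genuinely diagonal terms (which, as in the paper, are handled directly by Cauchy--Schwarz/H\"older and square functions, with no need of \cite{mptt:bi}).

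Second, and more seriously, the entangled regime $k\ll l$ is exactly where you stop: you state that the Cauchy--Schwarz step ``has to be arranged so as to separate $k$ from $l$ while preserving $k\le l$'' and that off-diagonal decay ``must be made quantitative and then summed,'' but you give no mechanism for either, and in fact there is no decay to sum --- the summand for $k\ll l$ is not small, it is structurally constrained. The paper's resolution rests on ideas absent from your proposal: (i) for $k\ll l$ the dual function $F_3$ is automatically frequency-localized at scale $2^l$ in \emph{both} fibers, so one may insert $\mathcal{Q}^{(1)}_l\mathcal{P}^{(2)}_l$ on $F_3$; (ii) the low-pass $P^{(2)}_l$ on $F_1$ is split as $\varphi_l(0)I+(P_l-\varphi_l(0)I)$, the identity term killing the $l$-dependence on $F_1$ and the error behaving like a $Q$-type projection; (iii) the constraint $k\le l$ is then removed \emph{exactly}, since the unconstrained sum minus the diagonal minus the $k\gg l$ part (which vanishes by frequency support) reproduces the constrained sum; (iv) after Cauchy--Schwarz in $l$ one needs not the scalar Bellman estimate fiberwise but vector-valued $\ell^2_l$-bounds for the one-parameter twisted paraproduct, obtained by freezing $F_1$ and using Marcinkiewicz--Zygmund, or by Grafakos--Martell extrapolation for the error term. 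Without (i)--(iv) your plan does not close; also, the final appeal to ``interpolating'' the model-form bound with the paraproduct pieces is not meaningful, since each piece of a decomposition must be bounded in the common range rather than interpolated against the others.
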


Passing to the Fourier side, the operators $T_1$ and $T_2$   can be viewed as     multiplier operators which map the tuple $(F_1,F_2)$ to the two-dimensional function   defined by 
\begin{align}
\label{mult-t}
x \mapsto 
\int_{\R^{4}} \widehat{F_1}(\xi) \widehat{F_2}(\eta) m(\xi,\eta) e^{2\pi i x\cdot (\xi+\eta)} d\xi d\eta
\end{align}
for a suitable bounded function $m$ on $\R^4$. In the case of $T_1$ and $T_2$,  the function  $m$  
satisfies 
\begin{align}\label{symest-bi1}
|\partial^{\alpha}_{(\xi_1,\eta_2)}\partial^{\beta}_{(\xi_2,\eta_1)} m(\xi,\eta)|\leq C  |(\xi_1,\eta_2)|^{-|\alpha|}|(\xi_2,\eta_1)|^{-|\beta|}
\end{align}
for all $\alpha,\beta\in \mathbb{N}_0^2$ up to order $N$ and all $(\xi,\eta)\neq 0$. 
In general, the estimates \eqref{symest-bi1} alone are not sufficient for boundedness of the multiplier operator \eqref{mult-t}. We elaborate on this in Section \ref{subsec:counter} by reducing a special case of \eqref{mult-t} to the aforementioned counterexample 
from  \cite{GK}. 

Here and in the sequel, the notion of bi- and  multi-parameter is related to the number of parameters of frequency scales. For instance, each factor on the right-hand side of \eqref{symest-bi1} gives rise to one parameter.
Given the constraint $k\leq l$, the symbols of $T_1,T_2$ do not split into the tensor products of two symbols  $m_1(\xi_1,\eta_2)$ and $m_2(\xi_2,\eta_1).$ However, bounds for $T_1,T_2$ imply bounds on the multiplier operator \eqref{mult-t} in the case when $m$ is indeed of tensor type, i.e. 
\begin{align}
\label{m-tensor}
m(\xi,\eta) = m_1(\xi_1,\eta_2)m_2(\xi_2,\eta_1),
\end{align}
where  $m_1,m_2$ satisfy the estimates
\begin{align}\label{symbolest}
|\partial_{(\zeta_1,\zeta_2)}^\alpha m_i(\zeta_1,\zeta_2)|\leq C  |(\zeta_1,\zeta_2)|^{-\alpha} 
\end{align}
for all $\alpha\in \mathbb{N}_0^2$ up to order $N$ and all $0\neq (\zeta_1,\zeta_2)\in \R^2$. This can be seen by  the classical cone decomposition.
\begin{corollary}
\label{cor1}
Let $m$ be given as in \eqref{m-tensor}. 
Then the associated  operator  \eqref{mult-t} 
is bounded from $\L^{p_1}  \times \L^{p_2} $ to $\L^{p'_3}$ whenever     $1<p_1,p_2<\infty$, $1<p_3'<2$, and $\frac{1}{p_1} + \frac{1}{p_2} = \frac{1}{p_3'}$. 
\end{corollary}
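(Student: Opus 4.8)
The plan is to deduce Corollary~\ref{cor1} from Theorem~\ref{thm:doubletwist} by means of the classical cone decomposition, which reduces the multiplier operator \eqref{mult-t} with a tensor symbol \eqref{m-tensor} to a finite number of symmetric copies of $T_1$ and $T_2$ (together with an ordinary Coifman--Meyer operator). First I would decompose each of $m_1,m_2$ in the standard way. Fix a Littlewood--Paley partition $\sum_{j\in\Z}\widehat{\phi_j}\equiv 1$ on $\R^2\setminus\{0\}$ with $\widehat{\phi_j}$ supported in $\{|\zeta|\sim 2^j\}$, and split each dyadic annulus into the two cones $\{|\zeta_1|\gtrsim|\zeta_2|\}$ and $\{|\zeta_2|\gtrsim|\zeta_1|\}$, assigning the overlap region to the first. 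On a cone piece one variable has size $\sim 2^j$ and the other is $\lesssim 2^j$; expanding the smooth symbol there into a Fourier series in the small variable and using \eqref{symbolest}, one obtains for $i=1,2$ a representation
\begin{equation*}
m_i(\zeta_1,\zeta_2)=\sum_{j\in\Z}\,\sum_{\nu\in\Z}\Big(c^{i,+}_{j,\nu}\,\psi_{j,\nu}(\zeta_1)\,\varphi_{j,\nu}(\zeta_2)+c^{i,-}_{j,\nu}\,\varphi'_{j,\nu}(\zeta_1)\,\psi'_{j,\nu}(\zeta_2)\Big),
\end{equation*}
in which $\psi_{j,\nu},\psi'_{j,\nu}$ are adapted to $[-2^{j+1},2^{j+1}]$ and vanish on $[-2^{j-100},2^{j-100}]$, the functions $\varphi_{j,\nu},\varphi'_{j,\nu}$ are adapted to $[-2^{j+1},2^{j+1}]$, and $|c^{i,\pm}_{j,\nu}|\lesssim_M(1+|\nu|)^{-M}$ uniformly in $j$ for every $M$.

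Next I would substitute these expansions into $m(\xi,\eta)=m_1(\xi_1,\eta_2)\,m_2(\xi_2,\eta_1)$, multiply out, and absorb the scale-dependent coefficients into the corresponding bump functions, which only inflates their adaptation constants by a factor $(1+|\nu|)^{O(1)}(1+|\nu'|)^{O(1)}$. Using the rapid decay of $c^{i,\pm}_{j,\nu}$ in $\nu$ and the (at worst polynomial) dependence of the bound in Theorem~\ref{thm:doubletwist} on the adaptation constants of the bumps, it then suffices to prove a uniform bound for each of the finitely many model operators obtained by fixing a cone type for the pair $(\xi_1,\eta_2)$ and one for $(\xi_2,\eta_1)$. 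Each such operator is a sum over $(k,l)\in\Z^2$, the scales coming from $m_1$ and from $m_2$ (between which there is no a~priori relation), of a summand of the schematic form $(\Phi^{(1)}_k\Psi^{(2)}_l F_1)(\Phi^{(2)}_k\Psi^{(1)}_l F_2)$, where each $\Phi,\Psi$ is a one-dimensional $P$- or $Q$-type multiplier as in the definition of $T_1,T_2$ and the superscript records the fiber on which it acts. To match the constraint $k\le l$ that is built into $T_1$ and $T_2$, I would write $\Z^2=\{k\le l\}\cup\{k\ge l\}$ and treat the diagonal $\{k=l\}$, counted twice, separately.

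The real work of the corollary is contained entirely in Theorem~\ref{thm:doubletwist}; the deduction itself presents no essential difficulty, and the only mildly delicate point is the bookkeeping in this last step. On each of the half-planes $\{k\le l\}$ and $\{k\ge l\}$, every model operator coincides --- after possibly applying the reflection $(x_1,x_2)\mapsto(x_2,x_1)$, which is an isometry of each $\L^p(\R^2)$ that conjugates fiber-$1$ multipliers to fiber-$2$ multipliers; transposing the two input functions, which is permissible since the asserted range of exponents is symmetric in $p_1,p_2$; and relabeling $(k,l)\leftrightarrow(l,k)$ --- with one of the operators $T_1$, $T_2$ for a suitable admissible choice of bump families. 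For instance, the combination in which $\xi_1$ dominates in $m_1$ and $\eta_1$ dominates in $m_2$ is, on $\{k\le l\}$, exactly $T_1$, and on $\{k\ge l\}$ it becomes $T_1$ with the two inputs transposed and the scales relabeled; the remaining three combinations are handled in the same way and produce $T_2$ together with reflected or transposed copies of $T_1,T_2$. For the diagonal $k=l$ all four one-dimensional multipliers live at scale $2^k$, so the corresponding symbol is supported in $\{|(\xi,\eta)|\sim 2^k\}$ and, since the dyadic pieces have bounded overlap, obeys $|\partial^\gamma m(\xi,\eta)|\lesssim|(\xi,\eta)|^{-|\gamma|}$ up to order $N$; hence the diagonal contribution is an ordinary Coifman--Meyer multiplier on $\R^4$, and is bounded from $\L^{p_1}\times\L^{p_2}$ to $\L^{p'_3}$ because $1<p_1,p_2<\infty$ and $1<p'_3<2$ (so in particular $p'_3>1/2$). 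Invoking Theorem~\ref{thm:doubletwist} and the Coifman--Meyer theorem for the respective pieces and summing the contributions over $\nu,\nu'$, whose number is controlled by the decay of the coefficients, completes the proof.
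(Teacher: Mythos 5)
Your proposal is correct and follows essentially the same route as the paper: a cone decomposition of each of $m_1,m_2$ combined with a Fourier series expansion with rapidly decaying coefficients, followed by matching the resulting model sums over $\{k\le l\}$ and $\{k\ge l\}$ with $T_1$, $T_2$ and their reflected/transposed copies, so that Theorem \ref{thm:doubletwist} applies. The only cosmetic difference is your separate treatment of the diagonal $k=l$ as a Coifman--Meyer piece, which is valid but unnecessary since the halves $\{k\le l\}$ and $\{k>l\}$ already partition $\Z^2$ and the constrained sums (diagonal included) are exactly of the form covered by Theorem \ref{thm:doubletwist}.
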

 The multiplier  operator \eqref{mult-t} with the symbol   \eqref{m-tensor} was suggested   by Camil Muscalu. The case when $m_1$ and $m_2$ are localized to   cones in the frequency plane   is sometimes called     twisted paraproduct.
 A special case  when one of the symbols $m_i$ is constantly equal to one, i.e.
 \begin{align}\label{op:tp}
x \mapsto 
\int_{\R^{4}} \widehat{F_1}(\xi) \widehat{F_2}(\eta) m(\xi_1,\eta_2) e^{2\pi i x\cdot (\xi+\eta)} d\xi d\eta,
 \end{align}
has been previously studied by Kova\v{c} \cite{vk:tp} and the first author \cite{bernicot:fw}. It is a degenerate case of the two-dimensional bilinear Hilbert transform investigated by Demeter and Thiele \cite{dt:2dbht}.   The one-parameter operator  \eqref{op:tp} is known to map  $\L^{p_1} \times \L^{p_2} \rightarrow \L^{p_3'}$ in a larger  range   than stated in  Corollary \ref{cor1}, namely whenever the exponents satisfy
\begin{align*}
   1<p_1,p_2<\infty,\; \frac{1}{2}<p_3'<2,\quad \textup{and} \quad \frac{1}{p_1} + \frac{1}{p_2} = \frac{1}{p_3'}.
\end{align*}
Indeed, the techniques  developed in   \cite{vk:tp} yield bounds whenever $2<p_1,p_2<\infty,\, 1<p_3'<2$. Fiber-wise Calder{\'o}n-Zygmund decomposition from \cite{bernicot:fw}  can be then used to extend the range of exponents.  

Let us also mention that there are further  connections of \eqref{op:tp}  with several objects in ergodic theory. 
Indeed, the techniques developed in \cite{vk:tp}   have been subsequently refined and used to study larger classes of multilinear forms generalizing \eqref{op:tp}, which are motivated by   problems on quantifying norm convergence of ergodic averages: the papers by Kova\v{c} \cite{vk:nvea} and Kova\v{c}, \v{S}kreb, Thiele and the second author \cite{DKST:nvea} study ergodic averages with respect to  two commuting transformations, while  \v{S}kreb    \cite{skreb:nvea} studies certain  cubic averages. Other  applications of   operators related to the twisted paraproduct are in stochastic integration obtained by Kova\v{c} and \v{S}kreb \cite{KS:stochastic} and  also in Euclidean Ramsey theory when investigating patterns in large subsets of the Euclidean space; see for instance  the work by Kova\v{c} and the second author \cite{DK:boxes}.
We also    refer to the paper by Stip\v{c}i{\'c} \cite{Stip}  and the references therein.

More generally,  one can consider the class of  all bilinear operators \eqref{mult-t}    
where   $m$ is  of  the  tensor type
   \begin{align}
   \label{m-tensor2}
   m(\zeta_1,\zeta_2,\zeta_3,\zeta_4)= m_1((\zeta_{a})_{a\in S_1})m_2((\zeta_{a})_{a\in S_2})m_3((\zeta_{a})_{a\in S_3})m_4((\zeta_{a})_{a\in S_4}),
   \end{align}
where   $S_{i}\subseteq\{1,2,3, 4\}$  and  
  ${m}_i$ are symbols on $\R^{|S_i|}$,  each of them satisfying the     estimates analogous to \eqref{symbolest}.  
  Then \eqref{m-tensor} is a special case with  $S_3=S_4=\emptyset$. 
  For the purpose of this paper, let us restrict ourselves to the case when the sets  $S_i$ are pairwise disjoint and consider \eqref{mult-t} with such a  symbol.  
If $S_1=\{1\}$, then due to boundedness of the one-dimensional Calder{\'o}n-Zygmund operators we may replace the function $\widehat{F}_1$  in \eqref{mult-t} by  
\begin{align}
\label{reduce}
m_1(\xi_1)\widehat{F}_1(\xi).
\end{align}
Performing the analogous step for any singleton $S_i$,  we may   reduce   \eqref{mult-t}   to the case where   each non-empty set $S_i$ satisfies $|S_i|\geq 2$. 
 Up to symmetries, mapping properties of the      cases that arise are discussed in  Table \ref{table} below. 
\begin{table}[htb]
\renewcommand{\arraystretch}{1.7}
\centering
\caption{Structure of $m$ and the   range of boundedness} 
\label{table}
\begin{tabular}{|c|c|l|}
\hline
& Structure of $m(\xi,\eta)$ & Known range of boundedness \\
\hline
1& $ m_1(\xi_1,\eta_1)$   & $1<p_1,p_2\leq \infty,\ \frac{1}{2}<p_3'<\infty$ \\ 
\hline
2& $ m_1(\xi_1,\xi_2)$   & $1<p_1,p_2< \infty, \ \frac{1}{2}<p_3'<\infty$\\ 
\hline
3 & $m_1(\xi_1,\eta_2)$   & $1<p_1,p_2< \infty,\ \frac{1}{2}<p_3'<2$ \\
\hline
4& $m_1(\xi_1,\xi_2)  m_2(\eta_1,\eta_2)$   & $1<p_1,p_2< \infty,\ \frac{1}{2}<p_3'<\infty$\\
\hline
5& $m_1(\xi_1,\eta_1) m_2(\xi_2,\eta_2)$  &  $1<p_1,p_2\leq \infty,\ \frac{1}{2}<p_3'<\infty$\\
\hline
6& $m_1(\xi_1,\eta_2)  m_2(\xi_1,\eta_1)$   & $1<p_1,p_2< \infty,\ 1<p_3'<2$ \\
\hline
7& $m_1(\xi_1,\xi_2,\eta_1)$  & $1<p_1,p_2< \infty,\ \frac{1}{2}<p_3'<\infty$ \\
\hline
8& $m_1(\xi_1,\xi_2,\eta_1,\eta_2)$  & $1<p_1,p_2\leq  \infty,\ \frac{1}{2}<p_3'<\infty$ \\
\hline
\end{tabular}
\end{table}

The operator corresponding to  Case 1 is a classical Coifman-Meyer multiplier \cite{cm1,cm2} acting on the first fibers of  the functions $F_1,F_2$. Case 2 is a pointwise product of a linear Fourier multiplier with the identity operator and can be treated analogously as  \eqref{reduce}. The estimates for Case 2 stated in Table \ref{table} are obtained  by H\"older's inequality.
Case 3 is the operator \eqref{op:tp}, which has been studied in \cite{vk:tp}. Case 4 is a pointwise product of two linear operators and the analogous reduction as in \eqref{reduce} applies. Case 5 is a bi-parameter version of a Coifman-Meyer multiplier  and its boundedness follows by   \cite{mptt:bi}. As remarked earlier, in \cite{mptt:bi} the authors are able to handle multipliers $m$ which are not necessarily of tensor type. This  is in   contrast with Case 6, which 
 is the content of  Corollary \ref{cor1}.  It remains to consider  Case 8, which is the classical Coifman-Meyer multiplier, and Case 7.
We prove the following bounds for Case 7 in Section~\ref{sec:cor}.

\begin{theorem} \label{thm:3}
If  $m(\xi,\eta) = m_1(\xi_1,\xi_2,\eta_1)$, then the associated operator \eqref{mult-t} is bounded from   $\L^{p_1} \times \L^{p_2}$ to $\L^{p'_3}$ whenever     $1<p_1,p_2<\infty$, $\frac{1}{2}<p_3'<\infty$, and $\frac{1}{p_1} + \frac{1}{p_2} = \frac{1}{p_3'}$.
\end{theorem}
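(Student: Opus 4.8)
The plan is to reduce, via the standard convolution-type ($P$--$Q$) decomposition of the Coifman--Meyer symbol $m_1$ on $\R^3$, to finitely many one-parameter model sums, and then to estimate each of these by combining a one-dimensional paraproduct estimate in the $x_1$-variable with Littlewood--Paley and Fefferman--Stein inequalities in the $x_2$-variable; the structural input that makes the full range attainable is that $m$ does not depend on $\eta_2$, so $F_2$ is untouched in the $x_2$-direction. Concretely, since $m_1(\xi_1,\xi_2,\eta_1)$ is smooth away from the origin of $\R^3$ and satisfies estimates analogous to \eqref{symbolest}, a Littlewood--Paley partition of unity writes $m_1=\sum_{k\in\Z}m_{1,k}$ with $m_{1,k}$ supported where $|(\xi_1,\xi_2,\eta_1)|\sim 2^k$; splitting further according to which of $\xi_1,\xi_2,\eta_1$ is $\sim 2^k$ and expanding each resulting piece in a Fourier series on $[-2^{k+1},2^{k+1}]^3$ (which produces only harmless modulations of the kernels and rapidly decaying coefficients to sum against) reduces the operator to finitely many model sums
\[
\sum_{k\in\Z}\big(R^{(1)}_{1,k}R^{(2)}_{2,k}F_1\big)(x)\,\big(R^{(1)}_{3,k}F_2\big)(x),
\]
in which each $R_{i,k}$ is a $P$-type or a $Q$-type one-dimensional multiplier at scale $2^k$ and exactly one of $R_{1,k},R_{2,k},R_{3,k}$ is of $Q$-type. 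This leaves three cases, according to whether the $Q$-type factor sits on the first fiber of $F_1$ (Case A), on the first fiber of $F_2$ (Case B), or on the second fiber of $F_1$ (Case C).

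Cases A and B are the more routine ones. In Case A the model sum $\sum_k(Q^{(1)}_{k}P^{(2)}_{k}F_1)(P^{(1)}_{k}F_2)$ is, for fixed $x_2$, a one-dimensional paraproduct in $x_1$ with the scale-dependent inputs $P^{(2)}_kF_1$ and $F_2$; the classical (smoothly truncated) paraproduct estimate bounds it in $\L^{p_3'}(\R^2)$, for $1/2<p_3'<\infty$, by $\lesssim\|(\sum_k|Q^{(1)}_kP^{(2)}_kF_1|^2)^{1/2}\|_{p_1}\,\|M^{(1)}F_2\|_{p_2}$, and since $|Q^{(1)}_kP^{(2)}_kF_1|\le M^{(2)}(Q^{(1)}_kF_1)$ the Fefferman--Stein inequality in $x_2$ followed by Littlewood--Paley in $x_1$ controls the first factor by $\|F_1\|_{p_1}$, while $\|M^{(1)}F_2\|_{p_2}\lesssim\|F_2\|_{p_2}$. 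Case B, $\sum_k(P^{(1)}_{k}P^{(2)}_{k}F_1)(Q^{(1)}_{k}F_2)$, is symmetric: now the band-pass sits on $F_2$, so $(\sum_k|Q^{(1)}_kF_2|^2)^{1/2}$ is bounded by $\|F_2\|_{p_2}$ via Littlewood--Paley, while $\sup_k|P^{(1)}_kP^{(2)}_kF_1|\le M^{(1)}M^{(2)}F_1$ handles the $F_1$-factor. Both cases yield the full range $1<p_1,p_2<\infty$, $1/2<p_3'<\infty$.

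Case C, the model sum $\mathcal A(F_1,F_2)=\sum_k(P^{(1)}_{k}Q^{(2)}_{k}F_1)(P^{(1)}_{k}F_2)$, is where the real work lies: both inputs carry only a low-pass in $x_1$, so there is no paraproduct cancellation in that variable, and since $F_2$ is untouched in $x_2$ it is not a paraproduct in that variable either. The plan is to write the low-pass on $F_2$ as $P^{(1)}_k=\mathrm{Id}-\sum_{k'>k}\dot Q^{(1)}_{k'}$. The contribution of $\mathrm{Id}$ is $(\Theta F_1)\cdot F_2$ with $\Theta=\sum_kP^{(1)}_kQ^{(2)}_k$, whose symbol $\sum_k\varphi_k(\xi_1)\psi_k(\xi_2)$ is a Marcinkiewicz/Mikhlin-type multiplier on $\L^{p_1}(\R^2)$ (only $O(1)$ scales contribute at any point, and on the support $|\xi_1|\lesssim|\xi_2|$), so this term is $\lesssim\|\Theta F_1\|_{p_1}\|F_2\|_{p_2}$ by H\"older. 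Interchanging the sums in the remaining contribution gives $-\sum_{k'}(\dot Q^{(1)}_{k'}F_2)(\Theta_{<k'}F_1)$ with $\Theta_{<k'}=\sum_{k<k'}P^{(1)}_kQ^{(2)}_k$; since $\psi_k(\xi_2)\neq 0$ forces $2^k\sim|\xi_2|$, the symbol of $\Theta_{<k'}$ is supported where $|\xi_1|\lesssim|\xi_2|\lesssim 2^{k'}$, so $\Theta_{<k'}F_1$ has $x_1$-frequency $\lesssim 2^{k'}$ and one faces a genuine one-dimensional paraproduct in $x_1$ between $F_2$ (band-passed at scale $2^{k'}$) and $F_1$ (low-passed at scale $2^{k'}$); it is controlled by $\lesssim\|(\sum_{k'}|\dot Q^{(1)}_{k'}F_2|^2)^{1/2}\|_{p_2}\,\|\sup_{k'}|\Theta_{<k'}F_1|\,\|_{p_1}$, the first factor being $\lesssim\|F_2\|_{p_2}$ by Littlewood--Paley and the second $\lesssim\|F_1\|_{p_1}$ because $\sup_{k'}|\Theta_{<k'}F_1|\lesssim M^{(2)}(\Theta F_1)+M^{(1)}M^{(2)}F_1$ (the term $M^{(2)}(\Theta F_1)$ absorbing all but $O(1)$ boundary scales) together with the Marcinkiewicz bound for $\Theta$ and the Hardy--Littlewood inequality. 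Summing the three cases completes the proof. The delicate point is precisely Case C: cancellation in the $x_1$-variable is simply unavailable, and it is recovered only after peeling off the bounded multiplier $\Theta$, which is possible because the independence of $m$ of $\eta_2$ renders $F_2$ inert in the $x_2$-variable --- the same structural feature that yields the full H\"older range $1/2<p_3'<\infty$, in contrast to Case~3.
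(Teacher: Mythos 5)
Your proposal is correct in its overall architecture, and it takes a genuinely different route from the paper exactly at the one place where the paper invokes heavy machinery. Both arguments start with the same three-dimensional cone decomposition into the three model sums (your Cases A, B, C are the paper's \eqref{opnew1}, \eqref{opnew2}, \eqref{opnew3}), and your treatment of A and B is essentially the paper's Banach-range argument together with the observation, made in the Remark closing Section \ref{sec:cor} (via \cite{Benea-Muscalu}), that the $\xi_1$-frequency localization of the summands lets one reach $p_3'\leq 1$ through a reverse square-function ($H^{p_3'}$-type) estimate rather than through a Calder\'on--Zygmund decomposition. The real divergence is Case C: the paper dualizes, factors a $\mathcal{Q}^{(2)}_k$ onto $(P^{(1)}_{2,k}F_2)\,F_3$, reduces via Khintchine to Kova\v{c}'s bounds for the twisted paraproduct \eqref{op:tp} (twice, to cover the whole Banach range), and then reaches $1/2<p_3'\leq 1$ by the fiber-wise Calder\'on--Zygmund decomposition of \cite{bernicot:fw}; you instead peel the identity off the low-pass acting on $F_2$, producing the harmless term $(\Theta F_1)F_2$ with $\Theta$ a Marcinkiewicz-type multiplier, plus a correction $\sum_{k'}(\Theta_{<k'}F_1)(\dot Q^{(1)}_{k'}F_2)$ which, because $\Theta_{<k'}F_1$ is genuinely low-passed in $\xi_1$, is frequency-localized and hence amenable to square/maximal function estimates in the full range. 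This is sound: your pointwise bound $\sup_{k'}|\Theta_{<k'}F_1|\lesssim M^{(2)}(\Theta F_1)+M^{(1)}M^{(2)}F_1$ does hold (compare $\Theta_{<k'}$ with $\widetilde P^{(2)}_{k'}\Theta$ and absorb the $O(1)$ boundary scales), and the mechanism rightly breaks down for the genuinely twisted Cases 3 and 6, where the correction term carries no $\xi_1$-localization. What each approach buys: the paper's route recycles the vector-valued \eqref{op:tp} estimates that are needed anyway for Theorems \ref{thm:doubletwist} and \ref{thm2}; yours is self-contained for Theorem \ref{thm:3}, avoids both Kova\v{c}'s theorem and the fiber-wise decomposition, and gives Hardy-space-flavored conclusions.

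Two steps should be written out with more care, though both are repaired by devices already in the paper. First, the identity $P^{(1)}_k=\mathrm{Id}-\sum_{k'>k}\dot Q^{(1)}_{k'}$ with annulus-supported $\dot Q_{k'}$ is not available for the modulated low-pass symbols your Fourier-series step produces; write instead $P^{(1)}_k=\varphi_k(0)\Phi^{(1)}_k+E^{(1)}_k$ with $\Phi_k$ an exact low-pass and $E_k$ adapted and vanishing at the origin (the $E_k$-contribution is a product of two square functions by Cauchy--Schwarz, valid in the full range), and telescope $\Phi_k$; the cone decomposition guarantees $\varphi_k(0)$ is independent of $k$. Second, both in Cases A/B and in the Case C correction term, the comparable-scale interactions (scales within $O(1)$ of $k'$) must be split off and estimated by Cauchy--Schwarz of two square functions before the remaining products are genuinely annulus-localized in $\xi_1$; it is this localization, not a black-box paraproduct theorem, that justifies your two-factor bounds, via duality with a factored $\widetilde Q^{(1)}_{k'}$ when $p_3'>1$ and via the fiber-wise $H^{p_3'}$ square-function embedding (Peetre maximal function plus Fefferman--Stein, or \cite{Benea-Muscalu} as in the paper's Remark) when $1/2<p_3'\leq 1$.
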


We emphasize that extending the range of exponents for $p_3'$ in Cases 3 and 6 remains open.
 
Dualizing the operators in  \eqref{mult-t} with   symbols of the form \eqref{m-tensor2}, the corresponding trilinear forms
  are particular examples of  singular Brascamp-Lieb integrals  with several singular  kernels, see  also  the survey \cite{dt:sblsurvey}.  Indeed, they can be represented by the trilinear forms
\begin{align}
\label{form:sbl}
\int_{\R^6} F_1(x+A_1s+B_1t)F_2(x+A_2s+B_2t) F_3(x)K(s,t) dxdsdt
\end{align}
for suitable matrices $A_1,B_1,A_2,B_2\in M_2(\R)$ and a kernel $K$, whose Fourier transform $\widehat{K}$ is of the form  \eqref{m-tensor2}.
In particular, the  operator with \eqref{m-tensor} is associated with a tensor product of two Calder{\'o}n-Zygmund  kernels
\begin{align*}
K(s,t) = K_1(s)K_2(t),
\end{align*}
where $K_1= \widecheck{m}_1 $, $K_2=\widecheck{m}_2$. 
In this case, if one of the kernels $K_1,K_2$ in \eqref{form:sbl} specializes to the Dirac delta, then the object reduces to a one-parameter family of the two-dimensional bilinear Hilbert transforms \cite{dt:2dbht}. 
Studying  \eqref{form:sbl} for an arbitrary choice of matrices $A_i,B_i$ and any Calder{\'o}n-Zygmund kernels $K_1,K_2$ remains an open problem.

Furthermore, the operators $T_1$ and $T_2$ can be viewed as particular  fiber-wise versions of the  paraproducts studied by Muscalu, Tao, and Thiele in \cite{mtt:unifpara}.
Following the setup from  \cite{mtt:unifpara}, let $n\geq 1$ and let $ \Omega_{n} \in \Z^{n}$ be a convex polytope of the form
\begin{align*}
\Omega_n = \{(k_1,\ldots,k_{n}) \in \Z^{n} : k_{i_\alpha} \geq k_{i'_\alpha} \textup{ for all } 1\leq \alpha \leq K \},
\end{align*}
where $i_\alpha,i'_\alpha \in \{1,\ldots, n\}$ and  $K\geq 0$ an  integer. 
Consider the operators which maps an $n$-tuple  $(F_i)_{i=1}^n$  of test functions on $\R^2$ to the two-dimensional function 
\begin{align}
\label{mainform}
x\mapsto  \sum_{(k_1,\ldots,k_{2n})\in \Omega_{2n} }   \prod_{i=1}^n  (Q^{(1)}_{i,k_i}Q^{(2)}_{i+n,k_{i+n}}F_i)(x),
\end{align}  where each $Q_{i,k}$ is a Fourier multiplier with symbol
  $\psi_{i,k}$, which is a bump function adapted in   $[-2^{k+1},2^{k+1}]$ and 
vanishes on  $[-2^{k-100}, 2^{k-100}]$. 
When
$\Omega_{2n}$ is of the form $\Omega_{n} \times \mathbb{Z}^n$, bounds for \eqref{mainform} follow from \cite{mtt:unifpara}.
When $n=1$,   \eqref{mainform} reduces to a classical linear Calder{\'o}n-Zygmund operator. 
When $n=2$, one can classify the cases similarly as in Table \ref{table}. Indeed, this can be seen  by summing over   $\Omega_4$ in at least two parameters $k_i$ in \eqref{mainform}.  Using the fact that $\sum_{k\in \Z}Q_{i,k}$ are linear Calder{\'o}n-Zygmund operators and hence satisfy the desired bounds,   the problem is reduced to objects with the summation over $\Omega_2$, such as $T_1,\,T_2$, and other bi-parameter analogues in \cite{mptt:bi}. 
This yields $\L^{p_1} \times \L^{p_2}$ to $\L^{p'_3}$    bounds whenever     $1<p_1,p_2<\infty$ and $1<p_3'<2$, but the known range may, in particular cases,  be larger.
However, as it is the case for $T_1$ and $T_2$,  the symbols of   \eqref{mainform}  are in general not of tensor type because of the constraint on  $\Omega_n$.

 The main  idea used in the proof of Theorem  \ref{thm:doubletwist}   is to 
 reduce the problem to the vector-valued estimates for the operator  \eqref{op:tp} with one constant symbol. The key step is in observing that   the operator is localized in frequency due to the frequency supports of the fibers of the input  functions. This can be seen in sharp contrast with \eqref{op:tp} itself, where such localization does not occur.  Localization of  the operator allows replacements of some low-frequency projections, acting on the input functions, with the identity operators. This, in turn, allows for  applications of H\"older's inequality.
 We prove Theorem \ref{thm:doubletwist} and Corollary \ref{cor1} in Section \ref{sec:thm1}. 
 In the case of \eqref{op:tp}, quasi-Banach estimates can be proven using the fiber-wise Calder{\'o}n-Zygmund decomposition from \cite{bernicot:fw}. This decomposition does not seem applicable in the context of Theorem \ref{thm:doubletwist}, as the symbols act on both fibers of the input functions. Extending the range of exponents in Theorem \ref{thm:doubletwist} remains an open problem.

Theorem \ref{thm:3} is proven in Section \ref{sec:thm2} and in the Banach case it relies on the bounds for the  operator \eqref{op:tp}. In Theorem \ref{thm2}, the operator acts on only one fiber of the function $F_2$; in this case we are able to use the fiber-wise Calder{\'o}n-Zygmund decomposition to prove quasi-Banach estimates as stated in Theorem \ref{thm:3}.

\subsection*{Multilinear and multi-parameter generalizations}
At present there is only partial understanding of the multilinear generalizations of Theorem \ref{thm:doubletwist}.
In the case of \eqref{m-tensor},   multilinear operators with only one non-constant symbol can be described in the language of  bipartite graphs and   were  studied by Kova\v{c} in \cite{vk:bell} in a    dyadic model. 

In this paper,  we discuss a particular  tri-parameter  trilinear example, which can be seen as a natural generalization of \eqref{m-tensor}.  Let $m_1,m_2,m_3$ be symbols satisfying  \eqref{symbolest} for $1\leq i \leq 3$. Define the trilinear  operator  which maps a triple  $(F_1,F_2,F_3)$ of functions on   $\R^2$ to the two-dimensional function   given by 
\begin{align}  
x \mapsto \int_{\R^6} &\widehat{F}_1(\xi)\widehat{F}_2(\eta)\widehat{F}_2(\tau) m_1(\xi_1,\eta_2)m_2(\eta_1,\tau_2)m_3(\tau_1,\xi_2)  e^{2\pi ix\cdot(\xi+\eta+\tau)}    d\xi d\eta d\tau, \label{tripletwist} 
\end{align}
where $\xi=(\xi_1,\xi_2)$, $\eta=(\eta_1,\eta_2)$, $\tau=(\tau_1,\tau_2)$.  
We prove the following bounds.  
\begin{theorem}
\label{thm2}
The operator \eqref{tripletwist} is bounded from  $\L^{p_1}  \times \L^{p_2}  \times \L^{p_3}  $ to  $\L^{p'_{4}} $ when $1<p_1,p_2,p_3<\infty$, $2<p_4<\infty$,   $\sum_{i=1}^4 \frac{1}{p_1}=1$, and    
$
\frac{1}{p_1}+\frac{1}{p_2}>\frac{1}{2},\, \frac{1}{p_2}+\frac{1}{p_3}>\frac{1}{2},\,  \frac{1}{p_1}+\frac{1}{p_3}>\frac{1}{2}.
$
\end{theorem}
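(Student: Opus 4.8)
The plan is to deduce Theorem~\ref{thm2} from the $\L^p$ bounds for the one-parameter twisted paraproduct \eqref{op:tp} and their vector-valued extensions, much as Corollary~\ref{cor1} follows from Theorem~\ref{thm:doubletwist}, which in turn rests on \eqref{op:tp}. Since each $m_j$, $j\in\{1,2,3\}$, is a Coifman--Meyer symbol on $\R^2$ in the sense of \eqref{symbolest}, the first step is the classical cone decomposition $m_j=\sum_{k\in\Z}m_j^{(k)}$, with $m_j^{(k)}$ supported where $|(\zeta_1,\zeta_2)|\sim 2^k$ and, after a further splitting, in one of the regions $\{|\zeta_1|\sim 2^k\gg|\zeta_2|\}$, $\{|\zeta_2|\sim 2^k\gg|\zeta_1|\}$ or $\{|\zeta_1|\sim|\zeta_2|\sim 2^k\}$, so that $m_j^{(k)}$ factors respectively as $\psi_{j,k}(\zeta_1)\varphi_{j,k}(\zeta_2)$, $\varphi_{j,k}(\zeta_1)\psi_{j,k}(\zeta_2)$ or $\psi_{j,k}(\zeta_1)\widetilde\psi_{j,k}(\zeta_2)$, in the notation of the introduction. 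Substituting into \eqref{tripletwist} rewrites the operator as a finite sum of model trilinear operators
\begin{align*}
M(F_1,F_2,F_3)=\sum_{(k_1,k_2,k_3)\in\Z^3}\bigl(\Pi^{(1)}_{1,k_1}\widetilde\Pi^{(2)}_{3,k_3}F_1\bigr)\bigl(\widetilde\Pi^{(2)}_{1,k_1}\Pi^{(1)}_{2,k_2}F_2\bigr)\bigl(\widetilde\Pi^{(2)}_{2,k_2}\Pi^{(1)}_{3,k_3}F_3\bigr),
\end{align*}
where each $\Pi^{(\cdot)}$, $\widetilde\Pi^{(\cdot)}$ is a $P$- or $Q$-type one-dimensional multiplier, $\Pi_{j,\cdot}$ (resp.\ $\widetilde\Pi_{j,\cdot}$) coming from the first (resp.\ second) argument of $m_j$, subject to the structural constraint that at least one of the two projections attached to each $m_j$ is of $Q$-type (both of them precisely for the diagonal piece). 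By the cyclic symmetry $(F_1,F_2,F_3;m_1,m_2,m_3)\mapsto(F_2,F_3,F_1;m_2,m_3,m_1)$ of \eqref{tripletwist} and the transpose symmetry inside each $m_j$, it suffices to bound one representative of each orbit.

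The main mechanism is to peel off summation indices. Each $k_j$ labels exactly two of the six projections, which act on two distinct input functions and on complementary fibers, while the third function is inert in $k_j$: e.g.\ $k_3$ labels $\widetilde\Pi^{(2)}_{3,k_3}$ on the second fiber of $F_1$ and $\Pi^{(1)}_{3,k_3}$ on the first fiber of $F_3$, with $F_2$ untouched. When the $m_3$-piece is off-diagonal, say $\Pi^{(1)}_{3,k_3}=Q^{(1)}_{3,k_3}$ and $\widetilde\Pi^{(2)}_{3,k_3}=P^{(2)}_{3,k_3}$, summing in $k_3$ with $k_1,k_2$ frozen collapses the $F_1$- and $F_3$-factors into a twisted paraproduct,
\begin{align*}
\sum_{k_3}\bigl(\Pi^{(1)}_{1,k_1}P^{(2)}_{3,k_3}F_1\bigr)\bigl(\widetilde\Pi^{(2)}_{2,k_2}Q^{(1)}_{3,k_3}F_3\bigr)=\Theta\bigl(\widetilde\Pi^{(2)}_{2,k_2}F_3,\ \Pi^{(1)}_{1,k_1}F_1\bigr),
\end{align*}
where $\Theta(G,H):=\sum_k(Q^{(1)}_kG)(P^{(2)}_kH)$ is the model of \eqref{op:tp}; when the $m_3$-piece is diagonal, the $k_3$-sum instead reads $\sum_k(Q^{(1)}_ka)(Q^{(2)}_kb)$ for suitable one-variable pieces $a,b$ and is bounded at once by the one-dimensional Littlewood--Paley square functions on the two fibers together with H\"older. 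In the off-diagonal case one is left with $M=\sum_{k_1,k_2}\bigl(\widetilde\Pi^{(2)}_{1,k_1}\Pi^{(1)}_{2,k_2}F_2\bigr)\,\Theta\bigl(\widetilde\Pi^{(2)}_{2,k_2}F_3,\Pi^{(1)}_{1,k_1}F_1\bigr)$, and one continues with the $Q$-type factor guaranteed by $m_1$ and by $m_2$: according to which of its two projections carries the $Q$, the corresponding summation produces a further twisted paraproduct on the pair $(F_1,F_2)$ or on $(F_2,F_3)$, or else a lacunary configuration handled by square functions and H\"older. Since across the model operators each of the three pairs occurs as the arguments of a twisted paraproduct, all three hypotheses $\tfrac1{p_i}+\tfrac1{p_j}>\tfrac12$ are used.

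For the exponents, recall that \eqref{op:tp} maps $\L^{q_1}\times\L^{q_2}\to\L^{q_3'}$ whenever $1<q_1,q_2<\infty$, $1<q_3'<2$ and $\tfrac1{q_1}+\tfrac1{q_2}=\tfrac1{q_3'}$, the Banach range furnished by \cite{vk:tp} and \cite{bernicot:fw}. A bilinear step on, say, $(F_3,F_1)$ needs $\tfrac1{p_1}+\tfrac1{p_3}\in(\tfrac12,1)$ --- the left bound is a hypothesis, the right one is automatic from $\tfrac1{p_1}+\tfrac1{p_3}=1-\tfrac1{p_2}-\tfrac1{p_4}<1$ --- and lands the output in $\L^r$ with $\tfrac1r=\tfrac1{p_1}+\tfrac1{p_3}$; pairing this with $F_2$ and invoking \eqref{op:tp} again lands in $\L^s$ with $\tfrac1s=\tfrac1{p_1}+\tfrac1{p_2}+\tfrac1{p_3}=\tfrac1{p_4'}$, where $s<2$ is exactly $p_4>2$ and $p_2,r\in(1,\infty)$. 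The hard part will be the model operators in which all three symbols are off-diagonal: there, because of the cyclic structure, peeling one further index merely reproduces the bilinear reduction obtained above, now with the roles of the three functions cyclically rotated, so no single peeling terminates the recursion; moreover the projections $\Pi^{(1)}_{1,k_1}$ and $\widetilde\Pi^{(2)}_{2,k_2}$ sit inside $\Theta$ and do not commute past it, so the surviving double sum cannot be decoupled through Littlewood--Paley pieces of the output of $\Theta$. I would estimate it either through a genuinely $\ell^2$-valued form of \eqref{op:tp}, with the frozen index as vector index, or by ordering the three scales and exploiting the ensuing frequency localization of the model operator --- in the manner of the proof of Theorem~\ref{thm:doubletwist} --- so as to replace the surviving low-frequency projections by identities before a last application of H\"older and the bounds for \eqref{op:tp}. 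The needed vector-valued extensions of \eqref{op:tp} follow from its scalar bounds by the usual Fefferman--Stein/Rubio de Francia arguments or from the square-function estimates underlying \cite{vk:tp}; assembling these and summing over the finitely many model operators and orbit representatives completes the argument.
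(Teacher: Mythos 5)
Your overall strategy matches the paper's in spirit (cone decomposition of $m_1,m_2,m_3$ into $P\otimes Q$ pieces, reduction to model triple sums, and ultimately to scalar and vector-valued bounds for the twisted paraproduct \eqref{op:tp}, with the three pairwise conditions $\frac1{p_i}+\frac1{p_j}>\frac12$ arising from the three pairs of functions), and your exponent bookkeeping is consistent with the claimed range. But the proof is not complete: the entire difficulty of the theorem sits in the model operators in which every parameter carries one $Q$-type and one $P$-type projection (the analogues of the paper's $U_1$ and $U_2$, with an unrestricted sum over $(k_1,k_2,k_3)\in\Z^3$), and for exactly these you stop at "I would estimate it either through a genuinely $\ell^2$-valued form of \eqref{op:tp} \ldots or by ordering the three scales." Neither route is carried out, and neither is routine. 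The first cannot close as stated, because two frozen indices survive and they are coupled through $F_2$, while the available vector-valued extensions of \eqref{op:tp} (via Marcinkiewicz--Zygmund or extrapolation) come with range restrictions and only become usable after the sum has been decoupled by frequency-support considerations, which the unrestricted triple sum does not yet provide. The second route is the paper's, but making it work requires concrete machinery you have not supplied: a case analysis on the ordering of the three scales in which (i) the regime with two comparable dominant scales is handled by Khintchine's inequality together with the full bi-parameter Theorem \ref{thm:doubletwist} (not just \eqref{op:tp}); (ii) the regime with one dominant scale is handled by writing $P_m=c_0I+(P_m-c_0I)$ and then invoking vector-valued bounds for two-parameter trilinear operators with one constant symbol over the constrained sums $k\ll l$ and $k\gg l$ --- the paper's Lemmas \ref{lemma:tri-1} and \ref{lemma:tri-2} --- where the $k\gg l$ regime is not symmetric to $k\ll l$ and is obtained only through telescoping identities of the type \eqref{telescoping} that swap $P$- and $Q$-projections; and (iii) an averaging/discretization argument reducing general bump functions to the special form $\psi_{i,k}=\varphi_{i,k}-\varphi_{i,k-1}$, without which the telescoping identities (needed both in Lemma \ref{lemma:tri-2} and to treat the $U_2$-type models, where both $Q$'s act on the same function) are unavailable. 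None of these steps appears in your argument, so the central case remains unproven.

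Two smaller points. Your dismissal of the "diagonal" pieces (both projections in a given parameter of $Q$-type) as "bounded at once" by two square functions and H\"older is too quick: Cauchy--Schwarz in that parameter still leaves the double sum in the other two parameters, which requires an argument of bi-parameter type; note that the paper's cone decomposition is arranged (by summing the smaller scale) so that no such diagonal pieces occur at all. Also, your heuristic of composing two applications of \eqref{op:tp} ("output of $\Theta$ fed into another twisted paraproduct") is only a scaling check, as you yourself observe that the projections do not commute past $\Theta$; the actual estimate must go through square functions, maximal functions, and Khintchine with the frozen indices as vector indices, which is where the restricted ranges of Lemmas \ref{lemma:tri-1}--\ref{lemma:tri-2} and hence the hypotheses of Theorem \ref{thm2} genuinely enter.
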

Note that the range in Theorem \ref{thm2}  is non-empty. 
 For example, it contains exponents  in vicinity of $3<p_1=p_2=p_3<4$. 
The proof of Theorem \ref{thm2}  is detailed in Section \ref{sec:thm2} and can be seen as an iteration of the steps used in the proof of Theorem \ref{thm:doubletwist} and its corollary, by gradually reducing to the vector-valued estimates for the operators   with one or more constant symbols. Iterating this procedure and applying estimates for one- and two-parameter operators, which hold in   restricted ranges of exponents, is the reason for further restriction of  the range in 
  Theorem~\ref{thm2}.

   The proof of Theorem \ref{thm2} does not immediately generalize to all higher degrees of multilinearities. Beside facing  the issues with the  exponent range, objects with constant symbols  which arise in the proof may not be   localized in frequency, which prevents further iterations of the approach. 
Similar issues  also arise when trying to generalize   Theorems \ref{thm:doubletwist} and \ref{thm2} to higher dimensions. 
Obtaining  bounds for a larger class of multi-parameter objects, such as multilinear operators in \eqref{mainform}, is closely related to studying a large class of maximally truncated singular integrals, one- and multi-parameter. One such instance is  the maximally truncated one-parameter twisted paraproduct  which maps a tuple   $(F_1,F_2)$ of functions  on $\R^2$ to the two-dimensional function   
 \begin{align}\label{maxtwist}
x \mapsto   \sup_{N>0}\  \Big| \sum_{|k|<N} (P_{1,k}^{(1)} F_1)(x) \, (Q^{(2)}_{1,k} F_2)(x)
\Big|. \end{align}
Its boundedness seems  to be out of reach of the currently available techniques. Further motivation for proving such  maximal estimates  and  also stronger variational estimates  is provided by questions on pointwise convergence of  certain  ergodic averages. 
Establishing  $\L^p$  estimates for the  operator which maps a tuple   $(F_1,F_2)$ of functions  on $\R^2$ to the one-dimensional function  
 \begin{align*} 
x_2 \mapsto    \sup_{N>0} \ \Big  \| \sum_{|k|<N} (P_{1,k}^{(1)} F_1)(x_1,x_2 ) \, (Q^{(2)}_{1,k} F_2) (x_1,x_2 )  \Big \|_{\L^p_{x_1}(\R)}
 \end{align*}
could be considered as  an intermediate case between     \eqref{op:tp} and  \eqref{maxtwist}. 
 Such bounds  also remain  open. \\

\noindent{\bf Notation.}
For two non-negative quantities $A,B$ we  write $A\lesssim B$ if there exists an absolute constant $C$ such that $A\leq CB$. If $C$ depends on the parameters  $P_1,\ldots, P_n$, we write $A\lesssim_{P_1,\ldots,P_n} B$.\\

\noindent {\bf Acknowledgments.} 
The authors thank
Cristina Benea, Vjekoslav Kova\v{c}, Camil Muscalu and Christoph Thiele for several motivating discussions.
The first author is  supported by ERC project FAnFArE no.637510. 
The second author acknowledges  the hospitality of Universit{\'e} de Nantes while this research was performed.

\section{The bi-parameter bilinear operators $T_1$ and $T_2$} 
\label{sec:thm1}
This section is devoted to the proof of Theorem \ref{thm:doubletwist}.   Throughout this section   we will   use the shorthand notation  $k\ll  l$ to denote $k< l-200$. Similarly,  $k \gg  l$ will denote $k>  l+200$ and $k\sim  l$ will mean  $l- 200\leq k \leq l+200$. 

 It will be evident  from the proof   that the argument  will not rely on the particular choice of  the bump functions $\varphi_{i,k},\psi_{i,k}$ as long as they satisfy the   assumptions   in the definition of $T_1$ and $T_2$. For simplicity of notation, we shall therefore only discuss the case $\varphi_{1,k}=\varphi_{2,k}=\varphi_k$ and $\psi_{1,k}=\psi_{2,k}=\psi_k$ for each $k\in \Z$. We  shall  also write  $P_{1,k}=P_{2,k}=P_k$ and $Q_{1,k}=Q_{2,k}= Q_k$.

\subsection{Boundedness of $T_1$} 
First we split the summation over $k\leq l$  into the regimes  where $k\sim l$ and  $k\ll l$ respectively, i.e. 
\begin{align}\label{T1split}
T_1(F,G) = \sum_{\substack{(k,l)\in \Z^2\\ l-200\leq k\leq l}}({Q}^{(1)}_{k} {P}^{(2)}_{l}  F_1)\,  ({Q}^{(1)}_{l}  {P}^{(2)}_{k} F_2)  + \sum_{\substack{(k,l)\in \Z^2\\ k\ll l}}({Q}^{(1)}_{k} {P}^{(2)}_{l}  F_1)\, ({Q}^{(1)}_{l}  {P}^{(2)}_{k} F_2).
\end{align}

Bounds for the  sum over $l-200\leq k\leq l$ follow  by  Cauchy-Schwarz. Indeed, using Cauchy-Schwarz in $l\in \Z$ we pointwise bound  this term   as   
\begin{align*}
\Big |  \sum_{-200\leq  s \leq 0} \sum_{l\in \Z}  ({Q}^{(1)}_{l+s} {P}^{(2)}_{l}  F_1)\,  ({Q}^{(1)}_{l}  {P}^{(2)}_{l+s} F_2) \Big |  \leq  \sum_{-200\leq  s \leq 0}   \| {Q}^{(1)}_{l+s} {P}^{(2)}_{l}   F_1 \|_{\ell^2_l(\Z)}  \| {Q}^{(1)}_{l}  {P}^{(2)}_{l+s} F_2   \|_{\ell^2_l(\Z)}.
\end{align*}
We consider the product of terms on the right-hand side for each fixed $-200\leq s\leq 0$.
To estimate the $\L^{p_3'}$ norm of the product we 
apply H\"older's inequality and use bounds for the classical square function.  We obtain
\begin{align*} &\| \| {Q}^{(1)}_{l+s} {P}^{(2)}_{l}   F_1 \|_{\ell^2_l(\Z)}  \| {Q}^{(1)}_{l}  {P}^{(2)}_{l+s} F_2   \|_{\ell^2_l(\Z)}\|_{\L^{p_3'}(\R^2)}\\
& \leq  \| {Q}^{(1)}_{l+s} {P}^{(2)}_{l}   F_1  \|_{\L^{p_1}(\ell^2_l)} \;\|  {Q}^{(1)}_{l}  {P}^{(2)}_{l+s} F_2   \|_{\L^{p_2}(\ell^2_l)}  \lesssim_{p_1,p_2}  \|F_1\|_{\L^{p_1}(\R^2)} \|F_2\|_{\L^{p_2}(\R^2)} \end{align*}
whenever $1/p_3'=1/p_1+1/p_2$ and $1<p_1,p_2<\infty$.
 In the end it remains to sum the individual contributions of these finitely many terms.

It remains to consider the case $k\ll l$ in \eqref{T1split}. By duality it suffices to study  the corresponding trilinear form and   show  
\begin{align*}
\Big |\sum_{(k,l)\in \Z^2: k\ll l}  \int_{\R^2} ({Q}^{(1)}_{k} {P}^{(2)}_{l}  F_1)\,  ({Q}^{(1)}_{l}  {P}^{(2)}_{k} F_2) \,F_3   \Big |  \lesssim_{p_1,p_2,p_3} \|F_1\|_{\L^{p_1}(\R^2)} \|F_2\|_{\L^{p_2}(\R^2)} \|F_3\|_{\L^{p_3}(\R^2)}
\end{align*}
for any choice of exponents $1<p_1,p_2<\infty$, $2<p_3<\infty$ with  $1/p_1+1/p_2+1/p_3=1$.
By the frequency supports of  $F_1$,  $F_2$, the   form on the left-hand side  can be written up to a constant as 
\begin{align}\label{form-11}
& \sum_{(k,l)\in \Z^2: k\ll l}   \int_{\R^2} ({Q}^{(1)}_{k} {P}^{(2)}_{l}  F_1)\,  ({Q}^{(1)}_{l}  {P}^{(2)}_{k} F_2) \,(\mathcal{Q}^{(1)}_{l} \mathcal{P}^{(2)}_{l} F_3),  
\end{align}
where $\mathcal{P}_l$ and $\mathcal{Q}_l$ are Fourier multipliers with symbols adapted to $[- 2^{l+3}, 2^{l+3}]$, the symbol of $\mathcal{P}_l$ is constant on $[- 2^{l+2}, 2^{l+2}]$,  and the symbol of $\mathcal{Q}_l$ vanishes on $[- 2^{l-90}, 2^{l-90}]$.

Then  we  write $P_l = \varphi_l(0)I + (P_l-\varphi_l(0)I)$ where $I$ is the identity operator, 
and plug this decomposition into the form \eqref{form-11}. This yields  
\begin{align}\label{m-e-terms}
 \eqref{form-11} =  \sum_{(k,l)\in \Z^2:k\ll l}   \mathcal{M}_{k,l}   \quad +\quad    \sum_{(k,l)\in \Z^2:k\ll l}   \mathcal{E}_{k,l} ,
\end{align}
where we have defined
\begin{align*}
  \mathcal{M}_{k,l} &= \int_{\R^2} ({Q}^{(1)}_{k}    F_1)\,  ({Q}^{(1)}_{l}  {P}^{(2)}_{k} F_2) \, (\varphi_l(0) \mathcal{Q}^{(1)}_{l} \mathcal{P}^{(2)}_{l} F_3),\\
    \mathcal{E}_{k,l} &=  \int_{\R^2}  ({Q}^{(1)}_{k} (P^{(2)}_l-\varphi_l(0)I^{(2)}) F_1)\, ( {Q}^{(1)}_{l}  {P}^{(2)}_{k} F_2) \,(\mathcal{Q}^{(1)}_{l} \mathcal{P}^{(2)}_{l} F_3).
\end{align*}

First we consider the term involving $\mathcal{M}_{k,l}$.  Since $|\varphi_l(0)|\leq 1$, up  to redefining $\mathcal{P}_l$ we may assume that $\varphi_l(0)=1$ for each $l$. 
We split the summation as 
\begin{align} \label{split-sum}
\sum_{(k,l)\in \Z^2:k\ll l}  \mathcal{M}_{k,l}  \quad = \quad   \sum_{(k,l)\in \Z^2}   \mathcal{M}_{k,l}  \quad -\quad  \sum_{(k,l)\in \Z^2:k\sim  l}   \mathcal{M}_{k,l}\quad  - \quad  \sum_{(k,l)\in \Z^2:k\gg l}    \mathcal{M}_{k,l} . 
\end{align}
 By the frequency support of the first fibers of the functions $F_i$ it follows that the term over $k\gg l$ vanishes. 
To estimate  the term with summation over   $k\sim l$ we use  H\"older's inequality in $l$ and in  the integration, yielding 
\begin{align*}
\Big |  \sum_{l\in \Z}\sum_{s\sim 0}    \mathcal{M}_{l+s,s} \Big |\leq \sum_{s\sim 0}  \| Q_{l+s}^{(1)} F_1 \|_{\L^{p_1}(\ell^\infty_l)} \|{Q}^{(1)}_{l}  {P}^{(2)}_{l+s} F_2 \|_{\L^{p_2}(\ell^2_l)} \|   \mathcal{Q}^{(1)}_{l}  \mathcal{P}^{(2)}_{l} F_3 \|_{\L^{p_3}(\ell^2_l)}.
\end{align*} 
It  remains to   use bound on the one-dimensional  maximal function and the two square functions, which hold uniformly in $s\sim 0$, and finally sum in $s.$

Thus,  it suffices to estimate  the case when the sum is unconstrained, i.e. over $(k,l)\in \Z^2$. By Cauchy-Schwarz in $l$ and  H\"older's inequality in the integration we estimate  
\begin{align}\label{form:unconstrained}
&\Big |  \sum_{(k,l)\in \Z^2} \mathcal{M}_{k,l}\Big | \leq \Big \|   \sum_{k\in \Z}  ({Q}^{(1)}_{k}    F_1)\,  ({Q}^{(1)}_{l}  {P}^{(2)}_{k} F_2)  \Big \|_{\L^{p_3'}(\ell^2_l)}\, \|  \mathcal{Q}^{(1)}_{l} \mathcal{P}^{(2)}_{l} F_3  \|_{\L^{p_3}(\ell^2_l)},
\end{align} 
where  $1/p_3+1/p_3'=1$. The second term on the right-hand side is a classical square function. To bound  the first term we use 
$\L^{p_1}(\R^2) \times \L^{p_2}(\ell^2)\rightarrow\L^{p_3'}(\ell^2)$ vector-valued estimates for the operator \eqref{op:tp}, which hold whenever $1<p_3'<2$ and $1<p_1,p_2<\infty$. These vector-valued  estimates are obtained by freezing the function $F_1\in \L^{p_1}$ and using the linear inequalities of Marcinkiewicz and Zygmund \cite{mz}, together with scalar-valued boundedness of \eqref{op:tp}. We obtain 
\begin{align*}
 \Big \|   \sum_{k\in \Z}  ({Q}^{(1)}_{k}    F_1)\,  ({Q}^{(1)}_{l}  {P}^{(2)}_{k} F_2) \Big \|_{\L^{p_3'}(\ell^2_l)} \lesssim_{p_1} \|F_1\|_{\L^{p_1}(\R^2)} \|{Q}^{(1)}_{l} F_2\|_{\L^{p_2}(\ell^2_l)} \lesssim_{p_1,p_2}  \|F_1\|_{\L^{p_1}(\R^2)} \|F_2\|_{\L^{p_2}(\R^2)},
\end{align*}
where  we have used the Littlewood-Paley inequality for the last bound.
This yields the desired estimate for the first term in \eqref{m-e-terms}.

It remains to estimate the second term in   \eqref{m-e-terms}. 
By frequency consideration in the second fiber, one has  
\begin{align}\label{qtildetilde}
 \sum_{(k,l)\in \Z^2: k\ll l}  \mathcal{E}_{k,l} =  c \sum_{(k,l)\in \Z^2: k\ll l}  \int_{\R^2} ({Q}^{(1)}_{k} \widetilde{\mathcal{Q}}^{(2)}_l F_1)\, ( {Q}^{(1)}_{l}  {P}^{(2)}_{k} F_2) \,(\mathcal{Q}^{(1)}_{l} \mathcal{P}^{(2)}_{l} F_3),
\end{align}
where $c$ is an absolute constant and  $\widetilde{\mathcal{Q}}_l$ is a   Fourier multiplier  with symbol  adapted to $[-2^{l+3},2^{l+3}]$ which vanishes at the origin. Indeed,   note that this is the case both when $\varphi_l(0)=0$ and when $\varphi_l(0)\neq 0$.
We split the summation in the regions where $(k,l)\in \Z^2$, $k\ll l$ and $k\gg l$. By 
   the analogous  considerations as  in the paragraphs from \eqref{split-sum} to \eqref{form:unconstrained} we note that   it suffices to  instead   consider the case when the sum is unconstrained, i.e.
\begin{align*}
 \sum_{(k,l)\in \Z^2}   \int_{\R^2} ({Q}^{(1)}_{k} \widetilde{\mathcal{Q}}^{(2)}_l F_1)\, ( {Q}^{(1)}_{l}  {P}^{(2)}_{k} F_2) \,(\mathcal{Q}^{(1)}_{l} \mathcal{P}^{(2)}_{l} F_3).
\end{align*}
By the  Cauchy-Schwarz inequality in $l$ and H\"older's inequality in the integration we bound the last display by 
\begin{align*}
 \Big \|   \sum_{k\in \Z} ({Q}^{(1)}_{k}  \widetilde{\mathcal{Q}}_l^2  F_1 )\, ( {Q}^{(1)}_{l}  {P}^{(2)}_{k} F_2 ) \Big \|_{\L^{p_3'}(\ell^2_l)}\,  \|  \mathcal{Q}^{(1)}_{l} \mathcal{P}^{(2)}_{l} F_3   \|_{\L^{p_3}(\ell^2_l)}.
\end{align*} 
The second term is a square function. 
Bounds for the first term follow from $\L^p(\ell^2) \times \L^q(\ell^2) \rightarrow \L^r(\ell^2)$ vector-valued estimates for the twisted paraproduct \eqref{op:tp} and two applications of the Littlewood-Paley inequality. The vector valued estimates which we need in this case are a consequence of scalar boundedness of the operator \eqref{op:tp} and a result  by Grafakos and Martell  \cite[Theorem 9.1]{gm}.  
This yields the desired bound for  the second term in \eqref{m-e-terms} and in turn establishes the claim for  $T_1$.

 \subsection{Boundedness of $T_2$}
The proof for $T_2$ proceeds in the analogous way as  the proof  for $T_1$ and we only sketch the necessary ingredients. 
 By duality it suffices to bound
 \begin{align*}
\Big | \sum_{(k,l)\in \Z^2: k\leq l}  \int_{\R^2}  ({P}^{(1)}_{k} {P}^{(2)}_{l}  F_1)\,  ({Q}^{(1)}_{l}  {Q}^{(2)}_{k} F_2) \,F_3   \Big |  \lesssim_{p_1,p_2,p_3} \|F_1\|_{\L^{p_1}(\R^2)} \|F_2\|_{\L^{p_2}(\R^2)} \|F_3\|_{\L^{p_3}(\R^2)}
\end{align*}
for any choice of exponents $1<p_1,p_2<\infty$, $2<p_3<\infty$ with  $1/p_1+1/p_2+1/p_3=1$.
The case when  $l-200\leq k\leq l$ is  bounded by H\"older's inequality, so it suffices to consider  $k\ll l$.

By  frequency considerations, the form on the left-hand side is a constant multiple of 
\begin{align*}
  \sum_{(k,l)\in \Z^2: k\ll l}  \int_{\R^2} ({P}^{(1)}_{k} {P}^{(2)}_{l}  F_1)\,  ({Q}^{(1)}_{l}  {Q}^{(2)}_{k} F_2) \, (\mathcal{Q}^{(1)}_l \mathcal{P}^{(2)}_l F_3)\,  
\end{align*}
for frequency  projections $\mathcal{Q}_l$ and $\mathcal{P}_l$ as in  \eqref{form-11}. 
As above we  split $P_l = \varphi(0)I+ (P_l-\varphi(0)I)$. By  considerations as in  the discussion after \eqref{m-e-terms}
  it remains  to estimate   the analogue of the term associated with $\mathcal{M}_{k,l}$, i.e.
\begin{align*}
 \sum_{(k,l)\in \Z^2}   \int_{\R^2} ({P}^{(1)}_{k}   F_1)\,  ({Q}^{(1)}_{l}  {Q}^{(2)}_{k} F_2) \, (\varphi(0)\mathcal{Q}^{(1)}_l \mathcal{P}^{(2)}_l F_3),
\end{align*} 
and the analogue of   the term associated with $\mathcal{E}_{k,l}$, i.e.
\begin{align*}
\sum_{(k,l)\in \Z^2}   \int_{\R^2} ({P}^{(1)}_{k}  \widetilde{\mathcal{Q}}^{(2)}_{l}    F_1)\,  ({Q}^{(1)}_{l}  {Q}^{(2)}_{k} F_2) \, (\mathcal{Q}^{(1)}_l \mathcal{P}^{(2)}_l F_3),
\end{align*}
where $\widetilde{\mathcal{Q}}_{l} $ is as in \eqref{qtildetilde}.  
The proofs for each of these terms  proceed analogously as for \eqref{form:unconstrained} and  \eqref{qtildetilde} respectively, reducing  to   vector-valued estimates for the operator \eqref{op:tp}.

\begin{remark}
An alternative way to prove bounds for $T_2$ is to deduce them from the bounds for $T_1$ via a  telescoping identity, which "swaps" the $P$- and $Q$-type operators. Indeed, this can be achieved in a special case when   $T_1$ and  $T_2$ are related by a condition on the bump functions as in Proposition \ref{mainprop-tri} in Section \ref{sec:thm2} below. Then, one can deduce a general case of $T_2$ from the  special case by an averaging argument. We perform such arguments in a trilinear tri-parameter  setting in  Section \ref{sec:thm2} below.
\end{remark}

\subsection{Proof of Corollary \ref{cor1}}
\label{sec:conedec}
This corollary can be deduced from Theorem \ref{thm:doubletwist} by  a classical cone decomposition, as performed  for instance in \cite{Muscalu}. For completeness we outline the relevant steps.

Let $m$ be a function on $\R^2\setminus \{0\}$ satisfying
\begin{align*}
|\partial^\alpha m(\zeta)| \lesssim |\zeta|^{-|\alpha|}
\end{align*}
for all $\alpha$ up to a large finite order and all $\zeta\neq 0$ in $\R^2$.
Let $\varphi$ be a smooth function supported in $[-2,2]$ and constantly equal to $1$ on $[-1,1]$. Let $\psi=\varphi-\varphi(2 \cdot )$. Then $\sum_{k\in \Z}\psi(2^{-k}\tau)=0$ for each $\tau\neq 0$. We can write 
\begin{align*}
m(\zeta) = \sum_{(k,l)\in \Z^2}m(\zeta)\psi(2^{-k}\zeta_1) \psi(2^{-l}\zeta_2).
\end{align*}
Splitting the sum into regions when     $k\leq  l$ and  $k> l$, respectively, and  summing in the smaller parameter we obtain 
\begin{align*}
m(\zeta) = \sum_{k\in \Z}m(\zeta)\varphi(2^{-k}\zeta_1) \psi(2^{-k}\zeta_2) + \sum_{k\in \Z}m(\zeta)\psi(2^{-k}\zeta_1) \varphi(2^{-(k-1)}\zeta_2).
\end{align*}
We consider the first sum; the second is treated analogously.

Note that for each $k\in \Z$, the summand is supported in 
$$\{\zeta\in \R^2: |\zeta_1|\leq 2^{k+1},\,2^{-k-1}\leq |\zeta_2| \leq 2^{k+1}\}.$$  
The smooth restriction of $m$ to that region can be decomposed into a double Fourier series, which yields
\begin{align*}
\sum_{k\in \Z}m(\zeta)\varphi(2^{-k}\zeta_1) \psi(2^{-k}\zeta_2) = \sum_{(n_1,n_2)\in\Z^2} \sum_{k\in \Z} C_{n_1,n_2}^k\varphi(2^{-k}\zeta_1)e^{c\pi i2^{-k}\zeta_1 n_1} \psi(2^{-k}\zeta_2)e^{c\pi i2^{-k}\zeta_2 n_2}
\end{align*}
where $c$ is a fixed constant and  the Fourier coefficients $C_{n_1,n_2}^k$ satisfy
$$|C_{n_1,n_2}^k|\lesssim (1+|n_1|)^{-N} (1+|n_2|)^{-N}$$
for any  $N>0$ up to a large order, uniformly in $k\in \Z$. For details we refer to   \cite{Muscalu}, Chapter 2.13.

Normalizing the bump functions and the coefficients,   the last display can be written as an absolute  constant times 
 \begin{align*}
\sum_{(n_1,n_2)\in \Z^2}(1+|n_1|)^{-2}(1+|n_2|)^{-2}  \sum_{k\in \Z} \widetilde{C}_{n_1,n_2}^k \varphi_{k,n_1}(\zeta_1)\psi_{k,n_2}(\zeta_2),
\end{align*}
where \[\varphi_{k,n}(\zeta_1) = c_1 \varphi(\zeta_1)e^{c\pi i \zeta_1 n_1},\;\psi_{k,n_2}(\zeta_2)=c_2 \psi(\zeta_2)e^{c\pi i \zeta_2 n_2},\]
and the constants $c_1,c_2$ are such that both functions are adapted to $[-2^{k+1},2^{k+1}]$. Moreover,  $\varphi_{k,n_1}(0)$ is the same constant for each $k\in \Z$, $\psi_{k,n_2}$ vanishes in $[-2^{k-1},2^{k-1}]$, and $|\widetilde{C}_{n_1,n_2}^k|\leq 1.$ 
This reduces the matters to considering symbols for each fixed $n_1,n_2$ with bounds uniform in $n_1,n_2$. Note that the coefficients $\widetilde{C}_{n_1,n_2}^k$ can assumed to be equal to $1$ by subsuming them  into the definitions of $\psi_{k,n_2}$.

We perform this decomposition for the symbols $m_1$ and $m_2$ in \eqref{m-tensor}. Then the bounds for the associated  operator follow from bounds on $T_1$ and $T_2$. 

\subsection{A counterexample} \label{subsec:counter}
In this section we show that the estimates \eqref{symest-bi1} alone are not sufficient for boundedness of the operator \eqref{mult-t}. We restrict ourselves to the Banach regime. More precisely, we show that there exists a bounded function $m$ on $\R^4$ satisfying 
\begin{align*}
|\partial^{\alpha}_{(\xi_1,\eta_2)}\partial^{\beta}_{(\xi_2,\eta_1)} m(\xi,\eta)|\lesssim_{\alpha,\beta}  |(\xi_1,\eta_2)|^{-|\alpha|}|(\xi_2,\eta_1)|^{-|\beta|}
\end{align*}
for all multi-indices $\alpha,\beta \in \mathbb{N}_0$ such that the associated operator \eqref{mult-t} does not satisfy   $\L^{p_1} \times \L^{p_2}$ to  $\L^{p_3'}$ estimates for any exponents  $1< p_1,p_2,p_3< \infty$. 

 To see this we first recall a  result from \cite{GK}; see  the remark at the end of Section~3 in \cite{GK}.  Existence is shown of a symbol $m_0$ on $\R^2\setminus\{0\}$ satisfying
 \begin{align*}
|\partial^{\alpha}_{\xi_1}\partial^{\beta}_{\eta_1} m_0(\xi_1,\eta_1)|\lesssim_{\alpha,\beta}|\xi_1|^{-\alpha}|\eta_1|^{-\beta} 
\end{align*}
for all multi-indices $\alpha,\beta\in \mathbb{N}_0$, 
 such that  the one-dimensional    operator mapping a tuple  $(f_1,f_2)$ of functions on $\R$ to a one-dimensional function
\begin{align}\label{gk:operator}
x \mapsto \int_{\R^2} \widehat{f_1}(\xi_1)\widehat{f_2}(\eta_1) m_0(\xi_1,\eta_1) e^{2\pi ix(\xi_1+\eta_1)} d\xi_1 d\eta_1,
\end{align}
does not satisfy any $\L^{p_1}\times \L^{p_2}$ to $\L^{p_3'}$  bounds.

To show that the estimates \eqref{symest-bi1} are in general not  sufficient  we reduce a special case of \eqref{mult-t} to this counterexample. 
Let us define
\begin{align*}
m(\xi_1,\xi_2,\eta_1,\eta_2) = m_0(\xi_1,\eta_1) \widetilde{m}(\xi_1,\eta_2) \widetilde{m}(\eta_1,\xi_2),
\end{align*}
where $\widetilde{m}$ is a smooth symbol on $\R^2\setminus \{0\}$  supported in the cone $\{(\zeta_1,\zeta_2) : |\zeta_2|\lesssim |\zeta_1| \}$, satisfying $\widetilde{m}(\zeta_1,0)=1$ whenever $|\zeta_1|\neq 0$,  and  
$$|\partial^\alpha \widetilde{m}(\zeta_1,\zeta_2)| \lesssim_\alpha |(\zeta_1,\zeta_2)|^{-|\alpha|}$$ 
for all $\alpha\in \mathbb{N}_0^2$ and all $(\zeta_1,\zeta_2)\neq 0$.
 Then $m$ satisfies  \eqref{symest-bi1} and we have $m(\xi_1,0,\eta_1,0)=m_0(\xi_1,\eta_1)$.
  We dualize the operator \eqref{mult-t} associated  with this multiplier and consider the corresponding trilinear form, which reads 
 \begin{align}\label{dualize-counter}
 \int_{\R^{4}} \widehat{F_1}(\xi) \widehat{F_2}(\eta) \widehat{F_3}(\xi+\eta) m(\xi,\eta)  d\xi d\eta.
 \end{align}
Let $\lambda>0$ and let $1<p_1,p_2,p_3<\infty$ be such that $1/p_1+1/p_2+1/p_3=1$. For $1\leq i \leq 3$ we  set  
$$F_i(x_1,x_2) = f_i(x_1) \lambda^{-1/p_i}\varphi(\lambda^{-1}x_2), $$ where $f_i$ and $\varphi$ are one-dimensional smooth compactly supported functions and $\widehat{\varphi}\geq 0$.
Plugging these particular  functions $F_i$ into the trilinear form \eqref{dualize-counter} we obtain 
\begin{align}\nonumber
\lambda^{-1} \int_{\R^4}  &\widehat{f}_1(\xi_1)\lambda \widehat{\varphi}(\lambda \xi_2)\widehat{f_2}(\eta_1)\lambda \widehat{\varphi}(\lambda \eta_2) \widehat{f_3}(\xi_1+\eta_1)\lambda \widehat{\varphi}(\lambda (\xi_2+\eta_2)) \\
 & m(\xi_1,\xi_2,\eta_1,\eta_2)   d\xi _1d\xi_2d\eta_1d\eta_2.  \label{counterex}
\end{align}
By rescaling in $\xi_2$ and $\eta_2$ it suffices to consider
\begin{align*}
\int_{\R^4}\widehat{f}_1(\xi_1)\widehat{\varphi}(\xi_2)\widehat{f_2}(\eta_1) \widehat{\varphi}( \eta_2) \widehat{f_3}(\xi_1+\eta_1) \widehat{\varphi}(\xi_2+\eta_2)m(\xi_1,\lambda^{-1}\xi_2,\eta_1,\lambda^{-1}\eta_2)    d\xi _1d\xi_2d\eta_1 d\eta_2  .  
\end{align*}
 Letting $\lambda\rightarrow \infty$ and then integrating in $\xi_2,\eta_2$ we obtain  a non-zero constant multiple of 
\begin{align*}
\int_{\R^2}\widehat{f}_1(\xi_1)\widehat{f_2}(\eta_1) \widehat{f_3}(\xi_1+\eta_1)  m_0(\xi_1,\eta_1)   d\xi_1d\eta_1,
\end{align*}
which  can be recognized as  \eqref{gk:operator} paired with a function $f_3$. Therefore, the form in the last display   does not satisfy any $\L^p$ estimates. Since   $\|F_i\|_{\L^{p_i}(\R^2)}$ equals   $\|f_i\|_{\L^{p_i}(\R)}\|\varphi\|_{\L^{p_i}(\R)}$, it follows that \eqref{counterex} does not satisfy any $\L^p$ estimates as well.

\section{A  one-parameter bilinear operator}
\label{sec:cor}

This section is devoted to the proof of Theorem \ref{thm:3}. By a three-dimensional  analogue of the cone decomposition  outlined  in Section \ref{sec:conedec} it  suffices to estimate the operators mapping $(F_1,F_2)$   to  two-dimensional functions given by
\begin{align} \label{opnew1}
x\mapsto \sum_{k\in\Z} (Q_{k}^{(1)}P^{(2)}_{1,k} F_1)(x)\, (P_{2,k}^{(1)}F_2)(x),\\ \label{opnew2}
x\mapsto \sum_{k\in\Z} (P_{1,k}^{(1)}P^{(2)}_{2,k} F_1)(x)\, (Q_{k}^{(1)}F_2)(x),
\end{align}
and
\begin{align}
 \label{opnew3}
x\mapsto \sum_{k\in\Z} (P_{1,k}^{(1)}Q^{(2)}_{k} F_1)(x)\, (P_{2,k}^{(1)}F_2) (x),
\end{align} 
where $P_{i,k}$, $Q_{k}$ are Fourier multipliers with symbols  $\varphi_{i,k}$ and $\psi_k$, respectively, which are  adapted to $[-2^{k+1},2^{k+1}]$, and in addition $\psi_k$ vanishes on $[-2^{k-1},2^{k-1}]$.

We will first prove bounds for each of them in the open Banach range. Then we will  extend the range with a fiber-wise  Calder\'on-Zygmund decomposition from \cite{bernicot:fw}.

\subsection{Boundedness in the open Banach range}
 In this section we  show that  the operators \eqref{opnew1}, \eqref{opnew2} and \eqref{opnew3} 
are bounded from $\L^{p_1}\times \L^{p_2} $ to $\L^{p'_3}$ whenever $1<p_1,p_2,p_3<~\infty$ and $1/p_1+1/p_2 +1/p_3=1$.

To bound the first operator \eqref{opnew1},  we first reduce to the case when the symbol of  $P_{2,k}$ is supported in $[-2^{k-99},2^{k-99}]$. Indeed, if this is not the case    we split 
$$P_{2,k} = \widetilde{P}_{2,k} + Q_{2,k},$$ 
where the symbols of $\widetilde{P}_{2,k}$ and $Q_{2,k}$ are supported in $[-2^{k-99},2^{k-99}]$ and $[-2^{k+1}, -2^{k-100}]\cup [2^{k-100},2^{k+1}]$, respectively. We split the operator accordingly and note that the term with $Q_{2,k}$  immediately reduces to two square functions by Cauchy-Schwarz.  

Now we dualize and consider the corresponding trilinear form. By the frequency localization in the first fibers of the functions, it suffices to show
\begin{align*}
\Big | \sum_{k\in\Z} \int_{\R^2} (Q_{k}^{(1)}P^{(2)}_{1,k} F_1)\, (\widetilde{P}_{2,k}^{(1)}F_2)\,  (\mathcal{Q}_k^{(1)}F_3) \Big | \lesssim_{p_1,p_2,p_3} \|F_1\| _{\L^{p_1}(\R^2)} \|F_2\|_{\L^{p_2}(\R^2)} \|F_3\|_{\L^{p_3}(\R^2)}
\end{align*} 
for any choice of exponents $1<p_1,p_2,p_3<\infty$ and $1/p_1+1/p_2 +1/p_3=1$.
Here $\mathcal{Q}_k$ is adapted to $[-2^{k+3},2^{k+3}]$ and the corresponding symbol vanishes at the origin.
This estimate  follows immediately by H\"older's inequality in $k$ and in the integration, together with  bounds  on the square and maximal function. 

For the second operator \eqref{opnew2} we proceed in the same way; this time reducing to the case when the symbol of  $P_{1,k}$ is supported in $[-2^{k-99},2^{k-99}]$.

It remains to consider \eqref{opnew3}.   By considerations as above, we may assume that $P_{1,k}$ is supported in $[-2^{k-99},2^{k-99}]$. 
By duality it suffices to study the 
  corresponding trilinear  form
\begin{align}\label{trilinearform}
\sum_{k\in \Z } \int_{\R^2}(P_{1,k}^{(1)}Q^{(2)}_{k} F_1)\, (P_{2,k}^{(1)}F_2) \, F_3 =   c \sum_{k\in \Z}\int_{\R^2} (P_{1,k}^{(1)}{Q}_{k}^{(2)}F_1)\,   \mathcal{Q}^{(2)}_{k}\Big( (P_{2,k}^{(1)} F_2)\,    F_3 \Big),
\end{align}
where $c$ is a constant and $\mathcal{Q}_k$ satisfies the properties as in the previous display.
By    Cauchy-Schwarz   in $k$ and H\"older's inequality in the integration, we bound the last display by
\begin{align*}
 \|  P_{1,k}^{(1)}Q_{k}^{(2)}F_1   \|_{\L^{p_1}(\ell^2_k)}   \Big \| \mathcal{Q}^{(2)}_{k}\Big( (P_{2,k}^{(1)} F_2)  \,  F_3 \Big ) \Big \|_{\L^{p'_1}(\ell^2_k)}. 
\end{align*}
The first term is a square function bounded in the full   range. 
For the second term, we  note that by Fatou's lemma, it suffices to restrict the $\ell^2_k$ norm to  $|k|\leq K$ for some $K>0$ and prove   bounds uniform in $K$. 
By Kintchine's inequality, this term    reduces to   having to estimate
\begin{align*}
\Big \|\sum_{|k|\leq K} a_k  \mathcal{Q}^{(2)}_{k}\Big( (P_{2,k}^{(1)} F_2)    F_3\Big )  \Big \|_{\L^{p_1'}(\R^2)} 
\end{align*} 
for uniformly bounded coefficients $a_k$. 
Dualizing with   $G\in \L^{p_1}(\R^2)$ it suffices to show
$$ \Big |  \sum_{|k|\leq K} \int_{\R^2} a_k  (\mathcal{Q}^{(2)}_{k}G)\,  (P_{2,k}^{(1)} F_2)  \,  F_3 \,  \Big | \lesssim_{p_1,p_2,p_3} \|G\| _{\L^{p_1}(\R^2)}  \|F_2\|_{\L^{p_2}(\R^2)} \|F_3\|_{\L^{p_3}(\R^2)} .$$
This estimate holds in the range  $1<p_1,p_2<\infty$,   $2<p_3<\infty$, and H\"older scaling, since   the left-hand side is an operator of the form \eqref{op:tp} paired with the function $F_3$.

 To obtain bounds for \eqref{opnew3} in the full range, it suffices to show estimates  with  $1<p_1,p_3<\infty$ and $2<p_2<\infty$.  The left-hand side of  \eqref{trilinearform} can be written as 
 \begin{align*}
 \sum_{k\in\Z} \int_{\R^2} (P_{1,k}^{(1)}Q^{(2)}_{k} F_1)\, (P_{2,k}^{(1)}F_2) \, \mathcal{P}^{(1)}_k F_3
 \end{align*}
 for a   multiplier $\mathcal{P}_k$ with symbol adapted to $[-2^{k+3},2^{k+3}]$. 
Writing $P_{2,k} = \varphi_{2,k}(0)I + (P_{2,k}-\varphi_{2,k}(0)I)$   we reduce the last display to
\begin{align*}
&  \sum_{k\in \Z} \int_{\R^2}  \,  (P_{1,k}^{(1)} Q_{k}^{(2)} F_1)\, F_2  \, (\varphi_{2,k}(0)\mathcal{P}^{(1)}_{k}   F_3)\, + \,    \sum_{k\in \Z} \int_{\R^2}   (P_{1,k}^{(1)} Q_{k}^{(2)} F_1)\, ( \mathcal{Q}_{k}^{(1)}F_2)\,  ( \mathcal{P}^{(1)}_{k}   F_3)  
\end{align*}
for a suitably defined  $\mathcal{Q}_{k}$ whose symbol vanishes at the origin.
The first term is analogous to the left-hand side of \eqref{trilinearform}, satisfying the estimates with  $1<p_1,p_3<\infty$ and $2<p_2<\infty$. The second term has two   $Q$-type operators   and bounds in the full range follow by H\"older's inequality and bounds on the maximal and square functions.

\subsection{Fiber-wise Calder{\'o}n-Zygmund decomposition}
In this section we show that the  operators \eqref{opnew1},  \eqref{opnew2} and \eqref{opnew3} 
are bounded from $\L^{p_1}\times \L^{p_2} $ to $\L^{p'_3}$ whenever $1<p_1,p_2<\infty$, $1/2<p_3'<\infty$ and $1/p_1+1/p_2=1/p_3'$.

 By real bilinear interpolation, it suffices to prove weak-type bounds with $p_3'< 1$ for the  operators \eqref{opnew1}, \eqref{opnew2} and \eqref{opnew3}. Each of them is of the form
$$ T(F_1,F_2)(x)=\sum_{k\in\Z} (P_{1,k}^{(1)}P^{(2)}_{3,k} F_1)(x)\, (P_{2,k}^{(1)}F_2)(x),$$
where $P_{i,k}$ are Fourier multipliers with symbols adapted to $[-2^{k+1},2^{k+1}]$. 
We  are going to apply the fiberwise Calder\'on-Zygmund from  \cite{bernicot:fw} to the function $F_2$.

It suffices to show that for any $1<p_1,p_2<\infty
$ and $1/2<p_3'< 1$, the operator $T$ satisfies the weak $\L^{p_1}\times \L^{p_2}  \rightarrow \L^{p_3',\infty}$ estimates. Fix $F_1\in \L^{p_1}(\R^2)$ and $F_2\in \L^{p_2}(\R^2)$.  By homogeneity we may assume 
$$\|F_1\|_{\L^{p_1}(\R^2)}=\|F_2\|_{\L^{p_2}(\R^2)} = 1.$$
Our goal is  to show that for every $\lambda>0$
$$ |\{x\in \R^2: \ |T(F_1,F_2)(x)|>\lambda\}| \lesssim_{p_1,p_2} \lambda^{-p_3'}.$$
We write $x=(x_1,x_2)$.
Fix $\lambda>0$ and perform a fiber-wise Calder{\'o}n-Zygmund decomposition of $F_2(\cdot, x_2)$ for fixed $x_2\in \R$ at level $\lambda^{p_3'/p_2}$. This yields  functions $g_{x_2}$ and atoms $a_{i,x_2}$ supported on disjoint dyadic intervals intervals $I_{i,x_2}$ such that
$F_2(\cdot, x_2) = g_{x_2} + \textstyle{\sum_{i}} a_{i,x_2}$ and  for all $x_2$ 
\begin{align*}
\|g_{x_2}\|_{\L^{p_2}(\R)}\leq \|F_2(\cdot, x_2)\|_{\L^{p_2}(\R)}  \quad \textup{and} \quad \|g_{x_2}\|_{\L^\infty(\R)}\lesssim \lambda^{p_3'/p_2}.
\end{align*}
Moreover,  for all $i$, the atom $a_{i,x_2}$ has vanishing mean on $I_{i,x_2}$,      
$$\|a_{i,x_2}\|_{\L^{p_2}(I_{i,x_2})}\lesssim \lambda^{p_3'/p_2}|I_{i,x_2}|^{1/p_2},$$
and the   intervals satisfy
\begin{equation} \sum_i |I_{i,x_2}| \lesssim \lambda^{-p_3'} \|F_2(\cdot,x_2)\|_{\L^{p_2}(\R)}^{p_2}. \label{eq:covering} \end{equation}

First we consider the good part $T(F_1,g)$, where $g=(x_1,x_2)\mapsto g_{x_2}(x_1)$. We have 
\[\|g\|_{\L^{p_2}(\R^2)}\leq \|F_2\|_{\L^{p_2}(\R^2)} =1\quad \textup{and} \quad \|g\|_{\L^\infty(\R^2)} \lesssim \lambda^{p_3'/p_2} .\]
 Therefore, for $q_2>p_2$ we get 
$$ \|g\|_{\L^{q_2}(\R^2)} \lesssim_{p_2} \lambda^{p_3'(1/p_2-1/q_2)}.$$
By the  boundedness in the Banach range obtained in  the previous section we obtain 
\begin{align*}
|\{x\in \R^2 :  |T(F_1,g)(x)|>\lambda\}| & \leq \lambda^{-s} \|T(F_1,g)\|^{s}_{\L^s(\R^2)} \\
& \lesssim_{p_1,p_2}\lambda^{-s} \|F_1\|_{\L^{p_1}(\R^2)}^s \|g\|_{\L^{q_2}(\R^2)}^s \lesssim \lambda^{-s + sp_3'(1/p_2-1/q_2)} = \lambda^{-p_3'},
\end{align*}
where $1<q_2,s<\infty$ are any exponents that satisfy  $1/{p_1} + 1/{q_2} = 1/{s}$, $q_2>p_2$ and $(p_1,q_2,s)$ belong  to the open Banach range. This is the desired estimate on the level set.

It remains to consider the bad part $T(F_1,b)$, where $b=(x_1,x_2)\mapsto \sum_i a_{i,x_2}(x_1)$.  Denote
$$E=\bigcup_{x_2\in \R} \bigcup_{i}\, (   2I_{i,x_2} \times \{x_2\}),$$
where $2I_{i,x_2}$ is  the interval with the same center as $I_{i,x_2}$ but twice the length.
Note  that
\begin{align*}
|E|\lesssim \int_{\R} |\cup_i2 I_{i,x_2}| dx_2 \lesssim \lambda^{-p_3'} \int_{\R} \|F_2(\cdot, x_2)\|_{\L^{p_2}(\R)}^{p_2} dx_2   
\lesssim \lambda^{-p_3'} \|F_2\|_{\L^{p_2}(\R^2)}^{p_2} = \lambda^{-p_3'},
\end{align*}
 where we have used \eqref{eq:covering}.
Therefore, it remains to estimate
$
|\{x \not \in E :\, |T(F_1,b)(x)| > \lambda   \}|.
$
By the definition we have
 \begin{align*}
  P^{(1)}_{2,k} b(x) = \sum_i P_{2,k} [a_{i,x_2}](x_1).
 \end{align*}
Fix $x_2\in \R$ and take $x_1\in \R \setminus \cup_i2 I_{i,x_2}$.  Denote by $c_{i,x_2}$ the  center of the interval $I_{i,x_2}$. 
Since $P_{2,k}$ is a convolution operator with some smooth function $\rho_k$ (with its Fourier transform adapted to $[-2^{k+1}, 2^{k+1}]$ up to a   large order), using the mean zero property of $a_{i,x_2}$, we obtain 
  \begin{align*}
  |P_{2,k} [a_{i,x_2}](x_1)| &\leq \int_{I_{i,x_2}} |a_{i,x_2} (w)| |\rho_k(x_1-w) - \rho_k(x_1-c_{i,x_2})| dw\\
  &\lesssim \int_{I_{i,x_2}} |a_{i,x_2} (w)|\, 2^{2k}|I_{i,x_2}|   \big (1+ 2^{k}|x_1-c_{i,x_2}|  \big )^{-100} dw\\
  &\lesssim \lambda^{p_3'/p_2}  \, 2^{2k}|I_{i,x_2}|^2 \big (1+ 2^{k}|x_1-c_{i,x_2}|  \big )^{-100}.
  \end{align*}
  Here we used   
  $$\|a_{i,x_2}\|_{\L^1(\R)} \leq |I_{i,x_2}|^{1/p_2'} \|a_{i,x_2}\|_{\L^{p_2}(\R)}  \lesssim \lambda^{p_3'/p_2} |I_{i,x_2}|.$$
  Therefore, since $|x-c_{i,x_2}|\geq |I_{i,x_2}|/2$ we have
$$ \sum_{k\in \Z} 2^{2k} |I_{i,x_2}|^2  \Big (1+ \frac{|x_2-c_{i,y}|}{2^{-k}} \Big )^{-100} \lesssim  \Big (1+ \frac{|x_2-c_{i,y}|}{|I_{i,x_2}|} \Big )^{-2}. $$
Therefore, we have showed that for  $x_1\in \R \setminus \cup_i2 I_{i,x_2}$ it holds 
 \begin{align*}
 |T(F_1,b)(x)|  \lesssim \lambda^{p_3'/p_2} {\mathcal M}(F_1)(x) H(x), 
 \end{align*}
where ${\mathcal M}$ is the Hardy-Littlewood maximal function and 
  \begin{align*}
  H(x) = \sum_i \Big (1+ \frac{|x_1-c_{i,y}|}{|I_{i,y}|} \Big )^{-2} .
  \end{align*}
Next we use boundedness of the Marcinkiewicz functions associated with a disjoint collection of the intervals $(I_{i,x_2})_i$, i.e.
 \begin{align*}
\Big \| \sum_{i} \Big (1+ \frac{|x_1-c_{i,x_2}|}{|I_{i,x_2}|} \Big )^{-2}  \Big \|_{\L_{x_1}^{p_2}(\R)} \lesssim_{p_2}   \Big( \sum_i |I_{i,x_2}| \Big)^{1/p_2}.
\end{align*}
(See   \cite{stein0} or Grafakos, Exercise 4.6.6.) This yields 
\begin{align*}
\|H\|_{\L^{p_2}(\R^2)}     \lesssim_{p_2} \Big \|  \Big( \sum_{i} |I_{i,x_2}| \Big)^{1/p_2}\Big \|_{\L_{x_2}^{p_2}(\R)}   \lesssim  \lambda^{-p_3'/p_2} \| \| F_2(x_1,x_2)\|_{\L^{p_2}_{x_1}(\R)} \|_{\L^{p_2}_{x_2}(\R)} \lesssim \lambda^{-p_3'/p_2}.
\end{align*}
Therefore,  
\begin{align*}
|\{x \not \in E :\, |T(F_1,b)(x)| > \lambda   \}| & \lesssim |\{x\in{\mathbb R}^2 : \   \lambda^{p_3'/p_2} {\mathcal M}(F_1)(x) H(x) \gtrsim \lambda \}| \\
& = |\{x\in{\mathbb R}^2: \  {\mathcal M}(F_1)(x) H(x) \gtrsim \lambda^{1-p_3'/p_2}\}|  \\
& \lesssim \lambda^{-p_3'+(p_3')^2/p_2} \|\mathcal{M}(F_1)\|_{\L^{p_1}(\R^2)}^{p_3'} \| H \|_{\L^{p_2}(\R^2)}^{p_3'} \\
& \lesssim_{p_1,p_2} \lambda^{-p_3'+(p_3')^2/p_2-(p_3')^2/p_2}  \lesssim \lambda^{-p_3}.
\end{align*}

Summarizing, we have shown that  the operator $T$ satisfies the weak  inequality $\L^{p_1} \times \L^{p_2} \rightarrow \L^{p_3',\infty}$. By interpolation it is bounded in the range claimed by Theorem \ref{thm2}.
 
\begin{remark} We emphasize that we are able to make use of the fiberwise Calder\'on-Zygmund in this case (and not in the case of Theorem \ref{thm:doubletwist}) because  the operators  \eqref{opnew1}, \eqref{opnew2} and \eqref{opnew3} act on only   fiber of the function $F_2$. 
Alternatively, one could apply the classical two-dimensional Calder{\'o}n-Zygmund decomposition by decomposing the function $F_1$.
\end{remark}

\begin{remark}
An alternative approach to prove quasi-Banach estimates for \eqref{opnew1} and \eqref{opnew2}, which avoids    the use of fiberwise Calder\'on-Zygmund decomposition and in fact  proves a stronger claim, namely,  that \eqref{opnew1} and \eqref{opnew2} are   bounded with values in the Hardy space $H^{p_3'} \subseteq L^{p_3'}$, is the following. Note that the operators  \eqref{opnew1} and  \eqref{opnew2} are localized in frequency in the first coordinate at   scale   $k$. So we can use the argument from \cite{Benea-Muscalu} about the inequality 
$ \|f\|_{p} \lesssim \|S(f)\|_{p}$
for $0< p \leq 1$, where  $S$ is the Littlewood-Paley square function. 
 More precisely, following the scalar case of \cite[Theorem 3.1]{Benea-Muscalu}, 
  one obtains for $0< p_3' \leq 1$   the bound
$$ \| \sum_{k\in\Z} (Q_{k}^{(1)}P^{(2)}_{1,k} F_1)\, (P_{2,k}^{(1)}F_2) \|_{\L^{p_3'}(\R^2)} \lesssim \| (Q_{k}^{(1)}P^{(2)}_{1,k} F_1)\, (P_{2,k}^{(1)}F_2) \|_{\L^{p_3'}(\ell^2_k)}.$$
To apply the  arguments from  \cite{Benea-Muscalu} it is  important is to obtain the localized estimates  which are reduced through duality by factorizing a $Q_k$ operator on the dual function,  which is possible in this situation. Then the square function on the right-hand side is easily estimated by the product of a maximal and square function, both of them which are bounded. A similar reasoning can be done for the second operator of type \eqref{opnew2}. In that way we recover the $\L^{p_3'}$--boundedness of operators \eqref{opnew1} and \eqref{opnew2} and we also prove boundedness in the Hardy space $H^{p_3'}$. 
 \end{remark}

\section{A tri-parameter trilinear operator}
\label{sec:thm2}

The goal of this section it to    prove Theorem \ref{thm2}.  Throughout this section   we will   use the shorthand notation  $k\ll  l$ to denote $k< l-50$. Similarly,  $k \gg  l$ will denote $k>  l+50$ and $k\sim  l$ will mean  $l- 50\leq k \leq l+50$. 

For  $k\in \Z$ and $1\leq i \leq 3$ let  $Q_{i,k}$  be Fourier multipliers with symbols $\psi_{i,k}$. For $k\in \Z$ and   $4\leq j \leq 6$  let $P_{j,k}$  be Fourier multipliers with symbols $\varphi_{j,k}$. Here $\psi_{i,k}$ and $\varphi_{j,k}$   are smooth one-dimensional functions. In what follows we will consider various   classes of symbols and we will apply more assumptions on them as we proceed.
We define the trilinear   operators $U_1$ and $U_2$ acting on two-dimensional functions $F_1,F_2,F_3:\R^2\rightarrow\mathbb{C}$ by
\begin{align*}
U_1(F_1,F_2,F_3)(x)& =  \sum_{(k,l,m)\in \Z^3}   (Q^{(1)}_{1,k} P^{(2)}_{4,m}  F_1)(x)\,  (Q^{(1)}_{2,l}  P^{(2)}_{5,k} F_2)(x)\, (Q^{(1)}_{3,m}  P^{(2)}_{6,l} F_3)(x),  \\
U_2(F_1,F_2,F_3)(x)& =   \sum_{(k,l,m)\in \Z^3}  (P^{(1)}_{5,k} P^{(2)}_{4,m}  F_1)(x)\,  (Q^{(1)}_{2,l}  Q^{(2)}_{1,k} F_2)(x)\, (Q^{(1)}_{3,m}  P^{(2)}_{6,l} F_3)(x).
\end{align*}
We keep in mind that they depend on the particular choice of the functions $\psi_{i,k},\varphi_{j,k}$, but we have suppressed that in the notation.

Let the exponents $p_1,p_2,p_3,$ and $p_4'$ satisfy the assumptions stated in   Theorem \ref{thm2}. 
The first step in the proof of Theorem \ref{thm2} is 
to decompose the  symbols $m_1,m_2,m_3$ as in Section \ref{sec:conedec}. 
This gives that   it suffices to prove  $\L^{p_1} \times\L^{p_2} \times\L^{p_3} $ to $\L^{p'_4} $  estimates for $U_1$ and $U_2$ under the assumptions that for $1\leq i \leq 3$,
  $\psi_{i,k}$   is adapted to $[-2^{k+1},2^{k+1}]$, and for $4\leq j \leq 6$,
$\varphi_{j,k}$  is  adapted to $[-2^{k+1},2^{k+1}]$. Moreover, each $\psi_{i,k}$ vanishes on $[-2^{k-1},2^{k-1}]$ and $\varphi_{j,k}(0)=\varphi_{j,0}(0)$  for each $k$.
Indeed, note that while cone decomposition gives $8$ terms,  all other terms that are of the form of $U_1$ and $U_2$ with  $P$- and $Q$-type operators interchanged  can be deduced from the bounds on $U_1$ and $U_2$ by symmetry considerations, that is, up to interchanging the role of the functions $F_i$ and considering their transposes $(x,y)\mapsto F_i(y,x)$.

Bounds for $U_1$ and $U_2$ resulting after a cone decomposition  will be deduced from Lemmas~\ref{mainprop-tri}-\ref{lemma:tri-2}. Lemma \ref{mainprop-tri} concerns a particular case of $U_1$.
\begin{lemma}  
\label{mainprop-tri}
Let $p_1,p_2,p_3$ and $p_4'$ be   exponents as   in Theorem \ref{thm2}.  
For $1\leq i \leq 3$  and $k\in \Z$  let   ${\psi}_{i,k}$ be a   smooth    function supported in $[-2^{k+3},2^{k+3}]$, which vanishes on $[-2^{k-3},2^{k-3}]$. Assume that for each $1\leq i \leq 3$, $\psi_{i,k}$ is of the form
\begin{align}\label{psiphicond}
{\psi}_{i,k} = {\varphi}_{i,k} - {\varphi}_{i,k-1},
\end{align}
 where ${\varphi}_{i,k}$  is a bump function adapted to  $[-2^{k+3},2^{k+3}]$. 
  For $4\leq j \leq 6$ and $k\in \Z$ let  
$\varphi_{j,k}$ be   adapted to $[-2^{k+4},2^{k+4}]$  and assume that the function 
$\psi_{j,k} = \varphi_{j,k} - \varphi_{j,k-1}$
is supported in $[-2^{k+4},2^{k+4}]$ and vanishes on $ [-2^{k-4},2^{k-4}]$.   

Under these assumptions on $\psi_{i,k}$ and $\varphi_{j,k}$, the associated operator $U_1$
 is bounded from $\L^{p_1} \times\L^{p_2} \times\L^{p_3} $ to $\L^{p'_4} $. 
\end{lemma}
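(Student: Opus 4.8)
The plan is to mimic the structure of the proof of Theorem~\ref{thm:doubletwist}, but now iterating the reduction twice, first peeling off the scale $m$ and then the scale $l$, at each stage replacing a low-frequency projection by the identity operator at the cost of an error term, and finally landing on a vector-valued instance of the bilinear operator \eqref{op:tp}. First I would dualize $U_1$ against a fourth function $F_4\in \L^{p_4}$ and write the trilinear (now quadrilinear) form
\begin{align*}
\Lambda(F_1,F_2,F_3,F_4) = \sum_{(k,l,m)\in\Z^3} \int_{\R^2} (Q^{(1)}_{1,k}P^{(2)}_{4,m}F_1)\,(Q^{(1)}_{2,l}P^{(2)}_{5,k}F_2)\,(Q^{(1)}_{3,m}P^{(2)}_{6,l}F_3)\,F_4.
\end{align*}
Because of the assumption \eqref{psiphicond}, the $Q$-type symbols telescope; this is the device that will let us insert a compensating $Q$-type factor on $F_4$ in each variable. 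By the frequency supports of the first fibers of $F_2$ and $F_3$ (carrying frequency $\sim 2^l$ and $\sim 2^m$ respectively) and the smallness conventions $k\ll l\ll m$ etc., the first fiber of $F_4$ can be frequency-localized to scale $\max(l,m)$; splitting into the regimes $k\sim l$, $l\sim m$ (handled by Cauchy--Schwarz and H\"older together with square/maximal function bounds, exactly as in the $k\sim l$ term of \eqref{T1split}) we reduce to the genuinely singular region $k\ll l\ll m$, where $F_4$ acquires a projection $\mathcal{Q}^{(1)}_m\mathcal{P}^{(2)}_m$ in its fibers.

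Next I would run the decomposition $P_{4,m}=\varphi_{4,m}(0)I+(P_{4,m}-\varphi_{4,m}(0)I)$ on the second fiber of $F_1$, as in \eqref{m-e-terms}. The main term $\mathcal{M}$ has the second-fiber projection on $F_1$ replaced by the identity, so $F_1$ now depends on $m$ only through $Q^{(1)}_{1,k}$; summing over $m$ freely (the constrained pieces $m\sim l$, $m\gg l$ are again controlled by H\"older plus square/maximal functions, and $m\ll l$ vanishes by first-fiber frequency support of $F_3$ versus $F_4$) collapses $\sum_m Q^{(1)}_{3,m}$ into a Calder\'on--Zygmund operator acting on the first fiber of $F_3$. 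This leaves a \emph{bilinear, bi-parameter} form in $(k,l)$ of exactly the type treated in Theorem~\ref{thm:doubletwist} (tensored with a harmless fiberwise projection), but now I would instead iterate once more: apply $P_{5,k}=\varphi_{5,k}(0)I+(\cdots)$ on the second fiber of $F_2$, collapse $\sum_k Q^{(1)}_{1,k}$ into a CZ operator on the first fiber of $F_1$, and arrive at a single-parameter form in $l$, which after Cauchy--Schwarz in $l$ and H\"older is bounded by a product of (i) a square function, (ii) $\L^p(\ell^2)$--valued bounds for the operator \eqref{op:tp} (obtained from its scalar boundedness via Marcinkiewicz--Zygmund \cite{mz} or via Grafakos--Martell \cite[Theorem~9.1]{gm}), and (iii) Littlewood--Paley. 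The error terms $\mathcal{E}$ at each stage are handled as in \eqref{qtildetilde}: a frequency computation in the relevant second fiber turns $P-\varphi(0)I$ into a $Q$-type multiplier $\widetilde{\mathcal{Q}}$ vanishing at the origin, which only improves matters, and the same iteration applies.

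The exponent bookkeeping is where the hypotheses of Theorem~\ref{thm2} enter: each time we invoke \eqref{op:tp} in its restricted range $1<p,q<\infty,\ 1<r'<2$ (equivalently one exponent above $2$), we pick up one of the three constraints $\frac1{p_i}+\frac1{p_j}>\frac12$, and the requirement $2<p_4<\infty$ comes from needing $F_4$ to absorb the "large" exponent at the outermost stage; tracking these through the three iterations yields precisely the stated range, and its non-emptiness near $p_1=p_2=p_3\in(3,4)$ is a direct check. \textbf{The main obstacle} I expect is organizational rather than conceptual: ensuring that after the first collapse the residual bi-parameter form really does fall under the scope of Theorem~\ref{thm:doubletwist} \emph{uniformly} in the frozen data (i.e. that the vector-valued upgrade is legitimate and the constants do not degenerate), and keeping careful track of which fiber each surviving projection acts on so that the repeated use of \eqref{op:tp} is always in the correct variables with the correct scaling. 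A secondary subtlety is verifying that the telescoping structure \eqref{psiphicond} is genuinely used — it is what guarantees the compensating multipliers on $F_4$ exist with the stated support properties — and that the constrained sums $k\sim l$, $l\sim m$, together with the vanishing sums, exhaust the complement of the main region.
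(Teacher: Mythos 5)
The central step of your plan --- that ``summing over $m$ freely \ldots collapses $\sum_m Q^{(1)}_{3,m}$ into a Calder\'on--Zygmund operator acting on the first fiber of $F_3$'' --- does not go through, and the same objection applies to your later collapse of $\sum_k Q^{(1)}_{1,k}$ onto $F_1$. In the region $k\leq l\ll m$ the dualizing function necessarily carries the $m$-dependent localization $\mathcal{Q}^{(1)}_{m}\mathcal{P}^{(2)}_{m}F_4$ (you introduce it yourself, and it is exactly what records the frequency balance in that region), so even after replacing $P^{(2)}_{4,m}$ on $F_1$ by the identity, \emph{two} factors of the integrand still depend on $m$; the completed sum in $m$ is then a paraproduct-type pairing of $F_3$ with $F_4$, not a linear multiplier applied to $F_3$. (If instead you refrain from localizing $F_4$, the $m$-dependence sits simultaneously on $F_1$ and $F_3$, so the collapse is again unavailable.) What is actually needed at this point is Cauchy--Schwarz in $m$, which leaves a square function of $F_4$ and an $\ell^2_m$-valued norm of the trilinear expression $\sum_{k\leq l}(Q^{(1)}_{k}F_1)\,(Q^{(1)}_{l}P^{(2)}_{k}F_2)\,(Q^{(1)}_{m}P^{(2)}_{l}F_3)$; one must therefore prove scalar and vector-valued bounds for trilinear operators with one constant symbol, which is precisely the content of Lemmas \ref{lemma:tri-1} and \ref{lemma:tri-2} and for which your outline offers no substitute. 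These lemmas are not instances of Theorem \ref{thm:doubletwist} or of \eqref{op:tp}; in particular the region $k\gg l$ (Lemma \ref{lemma:tri-2}) is handled by the telescoping identity \eqref{telescoping}, and this is where the hypothesis \eqref{psiphicond} is genuinely used --- not, as you suggest, to produce the compensating projection on $F_4$, which follows from support considerations alone.

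Two further points show the gap is not merely organizational. First, the two-dominating-frequency regime $k\ll l\sim m$ is not disposed of by Cauchy--Schwarz and square/maximal functions: after Cauchy--Schwarz in $m$ one is left with an $\ell^2_m$-valued version of the bi-parameter operator $T_1$, so Khintchine's inequality together with the full strength of Theorem \ref{thm:doubletwist} is required there. Second, in the correction terms produced when completing the $m$-sum (the analogue of \eqref{splitsum1}), the piece with $m\ll k\sim l$ does \emph{not} vanish; it has two $Q$-type projections in each parameter and is bounded via the Fefferman--Stein square-maximal estimate as in \eqref{sqmax}, while the piece $k\ll l$, $m\sim l$ again needs vector-valued bounds for \eqref{op:tp}. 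Your exponent bookkeeping could only be carried out once these missing ingredients are in place.
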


Next we state Lemmas \ref{lemma:tri-1} and \ref{lemma:tri-2},  which  concern operators of the form  \eqref{tripletwist} 
with one constant symbol. In particular, these  results will be used in the proof of Lemma \ref{mainprop-tri}.
\begin{lemma}
\label{lemma:tri-1}
For  $k\in \Z$ let $Q_{1,k},Q_{2,k},P_{5,k}$ and $P_{6,k}$   be  Fourier multipliers with symbols adapted to $[-2^{k+10},2^{k+10}] $. Further, assume that  the symbols  of $Q_{1,k},Q_{2,k}$ vanish on 
$[-2^{k-10},2^{k-10}]$. 
Then the trilinear operator which maps   $(F_1,F_2,F_3)$   to the two-dimensional function   given by 
 \begin{align*}
 x\mapsto  \sum_{(k,l)\in \Z^2:k \ll l}   (    Q^{(1)}_{1,k} F_1)(x)\, ( Q^{(1)}_{2,l}P^{(2)}_{5,k}F_2)(x)\, ( P^{(2)}_{6,l} F_3) (x) 
\end{align*} 
 is bounded from $\L^{p_1} \times\L^{p_2} \times\L^{p_3} $ to $\L^{p'_4} $ 
in the range   $1<p_1,p_2,p_3<\infty$, $2<p_4<\infty$,  $\sum_{i=1}^4 \frac{1}{p_i} =1$,  and  $\frac{1}{p_1}+\frac{1}{p_2} >\frac{1}{2}$. 
\end{lemma}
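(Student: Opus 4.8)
\textbf{Proof plan for Lemma \ref{lemma:tri-1}.} The operator in question is a trilinear, one-parameter object: the frequency supports of the first fibers of $F_1,F_2,F_3$ force the triple $(k,l,\text{scale of }F_3)$ to satisfy $k\ll l$ and the $F_3$-scale $\sim l$, so up to inserting an auxiliary multiplier $\mathcal{P}_{6,l}^{(2)}$ (adapted to $[-2^{l+12},2^{l+12}]$, constant near the origin) we may rewrite the associated trilinear form, after dualizing against $F_4\in\L^{p_4}$, as
\begin{align*}
\sum_{(k,l)\in\Z^2:k\ll l}\int_{\R^2}(Q^{(1)}_{1,k}F_1)\,(Q^{(1)}_{2,l}P^{(2)}_{5,k}F_2)\,(P^{(2)}_{6,l}F_3)\,(\mathcal{Q}^{(1)}_{l}F_4),
\end{align*}
where $\mathcal{Q}_l$ is adapted to $[-2^{l+12},2^{l+12}]$ with symbol vanishing at the origin (this is the $F_4$ analogue of the passage to $\mathcal{Q}^{(1)}_l\mathcal{P}^{(2)}_l F_3$ in \eqref{form-11}). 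The plan is then to follow the scheme used for the $k\ll l$ part of $T_1$: split $P_{5,k}=\varphi_{5,k}(0)I+(P_{5,k}-\varphi_{5,k}(0)I)$ and treat the resulting main term $\mathcal M$ and error term $\mathcal E$ separately.

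For the main term, after normalizing $\varphi_{5,k}(0)=1$, I would first remove the constraint $k\ll l$ by the telescoping argument of \eqref{split-sum}: the contribution of $k\gg l$ vanishes by the frequency supports of the first fibers, and the $k\sim l$ contribution is handled by H\"older in $l$ together with bounds on one one-dimensional maximal function and one square function, so it suffices to bound the unconstrained sum over $(k,l)\in\Z^2$. Applying Cauchy--Schwarz in $l$ and H\"older in the integration reduces this to estimating
\begin{align*}
\Big\|\sum_{k\in\Z}(Q^{(1)}_{1,k}F_1)\,(Q^{(1)}_{2,l}P^{(2)}_{5,k}F_2)\Big\|_{\L^{r}(\ell^2_l)}\ \Big\|\mathcal{P}^{(2)}_{6,l}F_3\cdot\mathcal{Q}^{(1)}_l F_4\Big\|_{\L^{r'}(\ell^2_l)},
\end{align*}
for an appropriate $r$ with $1/r+1/p_3+1/p_4=1$; the second factor is controlled by a square function in $F_4$ and a pointwise estimate by $\M F_3$ (or a Fefferman--Stein type inequality), giving $\|F_3\|_{p_3}\|F_4\|_{p_4}$ up to the $\ell^2_l$ sum, while the first factor is precisely an $\L^{p_1}(\R^2)\times\L^{p_2}(\ell^2)\to\L^{r}(\ell^2)$ vector-valued bound for the twisted paraproduct \eqref{op:tp}, available (by freezing $F_1$ and Marcinkiewicz--Zygmund, exactly as in the $T_1$ argument) whenever $1<p_1,p_2<\infty$ and $1<r<2$; after a Littlewood--Paley inequality in the $\ell^2_l$ variable this yields $\|F_1\|_{p_1}\|F_2\|_{p_2}$. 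The condition $r<2$, i.e. $1/p_1+1/p_2>1/2$, is exactly the stated hypothesis, and $r>1$ follows from $2<p_4<\infty$ together with H\"older scaling.

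The error term is handled as in \eqref{qtildetilde}: by a frequency computation in the second fiber (valid whether or not $\varphi_{5,k}(0)=0$), $(Q^{(1)}_{1,k}(P^{(2)}_{5,k}-\varphi_{5,k}(0)I^{(2)})F_1)$ can be rewritten, after summing against the other factors whose second-fiber frequencies localize near scale $l$, with $F_1$ replaced by $\widetilde{\mathcal Q}^{(2)}_l F_1$ for a multiplier $\widetilde{\mathcal Q}_l$ adapted to scale $l$ vanishing at the origin; the same unconstrained-sum reduction applies, and one now invokes the $\L^p(\ell^2)\times\L^q(\ell^2)\to\L^r(\ell^2)$ vector-valued estimates for \eqref{op:tp} (via scalar boundedness of \eqref{op:tp} and the Grafakos--Martell result \cite{gm}), followed by two Littlewood--Paley inequalities, to close in the same range. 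The main obstacle is purely bookkeeping in the frequency-support reductions that let one insert $\mathcal{P}^{(2)}_{6,l}$, $\mathcal{Q}^{(1)}_l$ and $\widetilde{\mathcal Q}^{(2)}_l$ and that force $k\gg l$ to drop out; the genuinely nontrivial analytic input is the scalar and vector-valued boundedness of \eqref{op:tp}, which is quoted, and everything else is H\"older plus standard square-function and maximal-function bounds.
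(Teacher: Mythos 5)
There is a genuine gap at your very first reduction, and it propagates through the rest of the argument. You insert the frequency cut-off $\mathcal{Q}^{(1)}_l$ on the dualizing function $F_4$ alone. But in this lemma the only first-fiber localizations available are those of $Q^{(1)}_{1,k}F_1$ (scale $2^k$) and $Q^{(1)}_{2,l}P^{(2)}_{5,k}F_2$ (scale $2^l$); the operator $P^{(2)}_{6,l}$ acts on the second fiber of $F_3$, so the first-fiber frequencies of $F_3$ and of $F_4$ are both unrestricted. The vanishing of the total first-coordinate frequency only forces the \emph{sum} of the $F_3$- and $F_4$-frequencies to lie at scale $\sim 2^l$; individually they can be arbitrarily large and cancel each other. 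Hence one may only insert $\mathcal{Q}^{(1)}_l$ on the product, i.e. work with $\mathcal{M}_{k,l}=\int_{\R^2}(Q^{(1)}_{k}F_1)\,(Q^{(1)}_{l}P^{(2)}_{k}F_2)\,\mathcal{Q}^{(1)}_{l}\big((P^{(2)}_{l}F_3)\,F_4\big)$ as the paper does, not with $(P^{(2)}_{6,l}F_3)(\mathcal{Q}^{(1)}_l F_4)$. This is not bookkeeping: grouping $F_3$ with $F_4$ under $\mathcal{Q}^{(1)}_l$ is exactly what makes the third factor a twisted-paraproduct-type quantity, estimated via Khintchine's inequality and the bounds for \eqref{op:tp} as in \eqref{kin-twisted}, and this is precisely where the hypothesis $2<p_4<\infty$ enters. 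In your scheme that factor is bounded by a maximal function times a square function, no constraint on $p_4$ ever appears, and the second appeal to \eqref{op:tp} disappears --- a sign that your reduction proves too much.

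Two later steps inherit the same error. With your localization the sum over $k\gg l$ does \emph{not} vanish, since $F_3$ carries no first-fiber restriction and a large $F_3$-frequency can balance the $F_1$-frequency $\sim 2^k$; the cancellation in \eqref{splitm22} only works with the paper's grouping, where the third factor as a whole is supported at first-fiber scale $\sim 2^l$. Likewise your $k\sim l$ case needs \eqref{kin-twisted} for the grouped factor, not just one maximal and one square function. Finally, your error-term discussion transplants the splitting $P=\varphi(0)I+(P-\varphi(0)I)$ from the proof of $T_1$, where $P^{(2)}_l$ acted on $F_1$; here $P^{(2)}_{5,k}$ acts on $F_2$ and $F_1$ carries no second-fiber operator, so the term $\big(Q^{(1)}_{1,k}(P^{(2)}_{5,k}-\varphi_{5,k}(0)I)F_1\big)$ you write does not occur and no such splitting is needed: the vector-valued estimate for \eqref{op:tp} (freezing $F_1$, Marcinkiewicz--Zygmund, Littlewood--Paley) applies directly to $\big\|\sum_{k}(Q^{(1)}_{1,k}F_1)(Q^{(1)}_{2,l}P^{(2)}_{5,k}F_2)\big\|_{\L^{s}(\ell^2_l)}$, which is the one part of your plan that matches the paper and correctly produces the constraint $\frac{1}{p_1}+\frac{1}{p_2}>\frac{1}{2}$.
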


\begin{proof}[Proof of Lemma \ref{lemma:tri-1}]
It will be clear from the proof   that the argument  will not depend on the particular choice of the frequency projections satisfying the requirements from the lemma, so for simplicity of the notation, we only discuss the special cases $P_{5,k}=P_{6,k}=P_k$,  $Q_{1,k}=Q_{2,k}=Q_k$.

By duality it suffices to consider the corresponding quadrilinear form
\begin{align*}
 \sum_{(k,l)\in\Z^2:k \ll l}  \int_{\R^2}  (    Q^{(1)}_{k} F_1)\, ( Q^{(1)}_{l}P^{(2)}_{k}F_2)\, ( P^{(2)}_{l} F_3)  \,   F_4.
\end{align*}
By  the frequency localization of the functions $F_i$ in the first fibers, to prove estimates for the quadrilinear form
it suffices to study
\begin{align} \label{splitm22}
 \sum_{(k,l)\in\Z^2:k \ll l}  \mathcal{M}_{k,l} \quad = \quad   \sum_{(k,l)\in \Z^2}   \mathcal{M}_{k,l}\quad  - \quad  \sum_{(k,l)\in \Z^2: k \sim l}  \mathcal{M}_{k,l} \quad -\quad   \sum_{(k,l)\in \Z^2: k \gg l}   \mathcal{M}_{k,l},
\end{align}
where we have defined
\[ \mathcal{M}_{k,l} = \int_{\R^2} (    Q^{(1)}_{k} F_1)\, ( Q^{(1)}_{l}P^{(2)}_{k}F_2)\,  \mathcal{Q}^{(1)}_{l}\Big(   ( P^{(2)}_{l} F_3) \,  F_4\Big).\]
Here $\mathcal{Q}_l$ is  a Fourier multiplier with a symbol  which is  adapted to $[-2^{l+12},2^{l+12}]$ and vanishes on $[-2^{l-12},2^{l-12}]$.

Note that the sum over $k\gg l$  in \eqref{splitm22} vanishes due to frequency supports.
To bound the sum with   $k\sim l$ we use H\"older's inequality to obtain 
\begin{align*}
&\Big| \sum_{l\in \Z} \sum_{s\sim 0}    \mathcal{M}_{l+s,l}  \Big|\\
& \leq    \sum_{s\sim 0}  \|   Q^{(1)}_{l+s} F_1 \|_{\L^{p_1}(\ell^\infty_l)}  \|   Q^{(1)}_{l}P_{l+s}^{(2)} F_2   \|_{\L^{p_2}(\ell^2_l)}  \Big  \|  \mathcal{Q}^{(1)}_{l}\Big(   ( P_{l}^{(2)} F_3)  \,  F_4\Big) \Big \|_{\L^r(\ell^2_l)}
\end{align*}
whenever $1/p_1+1/p_2+1/r =1$ and $1< p_1,p_2,r< \infty$. For a fixed $s$, bounds  for  the  first two terms   follow by boundedness of the maximal and square functions.
The third  term satisfies  
\begin{align}\label{kin-twisted}
 \Big \|  \mathcal{Q}^{(1)}_{l}\Big(   ( P_{l}^{(2)} F_3)  \,  F_4\Big) \Big \|_{\L^r(\ell^2_l)} \lesssim_{p_3,p_4} \|F_3\|_{\L^{p_3}(\R^2)} \|F_4\|_{\L^{p_4}(\R^2)}
\end{align}
whenever $1<p_3,r<\infty$, $2<p_4 <\infty$  and $1/r = 1/p_3 + 1/p_4$. 
 Indeed, this follows by Kintchine's inequality to linearize the square-sum and then use bounds for \eqref{op:tp}. In the end, it remains to sum  in $s\sim  0$.

Therefore, it suffices to bound  the form in \eqref{splitm22}   with  $(k,l)\in\Z^2$.   
In this case we note that the third factor in $\mathcal{M}_{k,l}$ does not depend on $k$. This allows to   apply Cauchy-Schwarz in $l$ and H\"older's inequality in the integration, yielding
\begin{align*}
& \Big| \sum_{(k,l)\in \Z^2}  \mathcal{M}_{k,l}  \Big| \leq  \Big\|    \sum_{k\in \Z}  (  Q^{(1)}_{k} F_1)\, ( Q_{l}^{(1)}P_{k}^{(2)} F_2)  \Big \|_{\L^s(\ell^2_l)} \Big \|    \mathcal{Q}^{(1)}_{l}\Big(    ( P^{(2)}_{l}F_3)   \,  F_4 \Big )\Big    \|_{\L^r(\ell^2_l)} 
\end{align*}
whenever $1/r+1/s=1$, $1<r,s<\infty$. 
The second factor on the right-hand side equals  \eqref{kin-twisted}. From the known bounds, we obtain the condition 
$1/p_3+1/p_4 = 1/r$ and $1<p_3,r<\infty$, $2<p_4<\infty$.   The first factor maps 
$\L^{p_1}(\R^2)\times \L^{p_2}(\ell^2) \rightarrow \L^{s}(\ell^2)$ whenever
$1/p_1+1/p_2 = 1/s$ and $1<p_1,p_2<\infty$, $1/s>1/2$. 
This follows from vector-valued estimates for  \eqref{op:tp} as discussed in the display following \eqref{form:unconstrained}. 
\end{proof}

\begin{lemma}
\label{lemma:tri-2}
For $k\in \Z$, let  $Q_{1,k},Q_{2,k}$  be Fourier multipliers with symbols  $\sigma_{1,k},\sigma_{2,k}$ which  are functions supported in $[-2^{k+10},2^{k+10}]$ and  vanish  on $[-2^{k-10}, 2^{k-10}]$. 
For $k\in \Z$ let   $P_{5,k},P_{6,k}$  be Fourier multipliers with symbols  $\rho_{5,k},\rho_{6,k}$, which  are
    functions adapted to $[-2^{k+10},2^{k+10}]$.
    
Assume that for each $i\in \{1,2,5,6\}$, these functions are of the form 
\[\sigma_{i,k} = \rho_{i,k}-\rho_{i,k-1},\] where  $\sigma_{4,k},\sigma_{5,k}$   are further functions supported in $[-2^{k+10},2^{k+10}],$ which vanish  on $[-2^{k-10}, 2^{k-10}]$, while   $\rho_{1,k},\rho_{2,k}$    are
    functions adapted to $[-2^{k+10},2^{k+10}]$. 
Then the trilinear operator which maps   $(F_1,F_2,F_3)$   to the two-dimensional function   given by   
 \begin{align}\label{form2-bis}
x\mapsto  \sum_{(k,l)\in \Z^2:k \gg l}   (    Q^{(1)}_{1,k} F_1)(x)\, ( Q^{(1)}_{2,l}P^{(2)}_{5,k}F_2)(x)\, ( P^{(2)}_{6,l} F_3)  (x)
 \end{align}
 is bounded from $\L^{p_1} \times\L^{p_2} \times\L^{p_3} $ to $\L^{p'_4} $
in the range      $1<p_1,p_2,p_3<\infty$, $2<p_4<\infty$, $\sum_{i=1}^4 \frac{1}{p_i} =1$,  and $\frac{1}{p_2}+\frac{1}{p_3} >\frac{1}{2}$.
\end{lemma}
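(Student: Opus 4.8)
\emph{Strategy.} The plan is to mirror the proof of Lemma~\ref{lemma:tri-1}, with the roles of the two frequency scales (and, correspondingly, of $F_1$ and $F_3$) interchanged: here the \emph{smaller} scale is $l$, and summing over it will produce a twisted paraproduct of the form \eqref{op:tp} in the pair $(F_2,F_3)$ rather than in $(F_1,F_2)$, which is precisely why the non-H\"older restriction now reads $1/p_2+1/p_3>1/2$. By duality it suffices to estimate the quadrilinear form $\sum_{k\gg l}\int_{\R^2}(Q^{(1)}_{1,k}F_1)(Q^{(1)}_{2,l}P^{(2)}_{5,k}F_2)(P^{(2)}_{6,l}F_3)\,F_4$ by $\prod_{i=1}^{4}\|F_i\|_{\L^{p_i}}$ for exponents with $1<p_1,p_2,p_3<\infty$, $2<p_4<\infty$, $\sum_{i=1}^{4}1/p_i=1$. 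Since $Q^{(1)}_{1,k}F_1$ and $Q^{(1)}_{2,l}F_2$ have first-fiber frequencies in annuli of sizes $2^{k}$ and $2^{l}$ respectively and $k\gg l$, the product $(Q^{(1)}_{1,k}F_1)(Q^{(1)}_{2,l}P^{(2)}_{5,k}F_2)$ is localized in the first fiber at scale $2^{k}$; this lets me insert a first-fiber Littlewood--Paley projection $\mathcal{Q}^{(1)}_k$, adapted at scale $2^k$ and vanishing at the origin, acting on $(P^{(2)}_{6,l}F_3)F_4$, and I denote the resulting summand by $\mathcal{M}_{k,l}$.

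\emph{Splitting and the diagonal term.} Next I run the standard unfolding $\sum_{k\gg l}\mathcal{M}_{k,l}=\sum_{(k,l)\in\Z^2}\mathcal{M}_{k,l}-\sum_{k\sim l}\mathcal{M}_{k,l}-\sum_{k\ll l}\mathcal{M}_{k,l}$. The sum over $k\ll l$ vanishes identically: moving $\mathcal{Q}^{(1)}_k$ back onto $(Q^{(1)}_{1,k}F_1)(Q^{(1)}_{2,l}P^{(2)}_{5,k}F_2)$, that product then has first-fiber frequency in an annulus of size $2^{l}\gg 2^{k}$, disjoint from the support of the symbol of $\mathcal{Q}_k$. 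The sum over $k\sim l$ is handled exactly as the corresponding term in Lemma~\ref{lemma:tri-1}: by H\"older in the finitely many values of $k-l$, in the remaining scale, and in the spatial integration, one controls $F_1$ by a one-dimensional maximal function, $F_2$ by a square function, and the last factor by Kintchine's inequality together with the scalar bounds for \eqref{op:tp} (available because $2<p_4<\infty$, i.e.\ $1<p_4'<2$). This leaves the unconstrained sum.

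\emph{The unconstrained term and the main obstacle.} In the unconstrained sum the factor $Q^{(1)}_{1,k}F_1$ depends only on $k$, so Cauchy--Schwarz in $k$ followed by H\"older bounds it by $\|F_1\|_{\L^{p_1}}$ (Littlewood--Paley) times $\big\|\sum_l (Q^{(1)}_{2,l}P^{(2)}_{5,k}F_2)\,\mathcal{Q}^{(1)}_k[(P^{(2)}_{6,l}F_3)F_4]\big\|_{\L^{p_1'}(\ell^2_k)}$. Using the genuine first-fiber localization at scale $2^k$ that the inserted $\mathcal{Q}_k$ forces on the integrand, this $\ell^2_k$-norm reduces, after a Littlewood--Paley inequality, to a sum over $l$ of twisted paraproducts of the form \eqref{op:tp} acting on $F_2$ and $F_3$; these, in turn, are controlled by two more Littlewood--Paley inequalities and the vector-valued extensions of the scalar bounds for \eqref{op:tp}, which follow from the Marcinkiewicz--Zygmund inequalities \cite{mz} and the Grafakos--Martell theorem \cite[Theorem~9.1]{gm}, exactly as in the treatment of the $\mathcal{E}$-term in the proof of $T_1$. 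The twisted-paraproduct step is the only one that requires $1/p_2+1/p_3>1/2$ (it maps into $\L^{c'}$ with $1/c'=1/p_2+1/p_3$, forcing $c'<2$), and the exponent bookkeeping $1/p_1'=1/p_2+1/p_3+1/p_4$ keeps the maximal- and square-function estimates in their full ranges. The one delicate point — and the reason this lemma carries the extra structural hypotheses $\sigma_{i,k}=\rho_{i,k}-\rho_{i,k-1}$ — is that $P^{(2)}_{5,k}F_2$ is a low-pass rather than a single Littlewood--Paley piece, so a naive $\ell^2_k$-sum over it diverges; one first writes $P_{5,k}=cI+\sum_{j\le k}Q_{5,j}$ with $Q_{5,j}$ genuine Littlewood--Paley pieces and then performs a further routine $(j,k)$-splitting, so that in every application of a vector-valued or Marcinkiewicz--Zygmund argument the summation index is carried either by $Q_{1,k}$ on $F_1$ or by the inserted $\mathcal{Q}_k$. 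Disentangling the frozen low-pass operator from the inserted projection in this way is the technical heart of the argument; once it is done, the remaining steps are routine.
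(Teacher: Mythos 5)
Your setup up to the unconstrained sum is fine and parallels the proof of Lemma \ref{lemma:tri-1}: for $k\gg l$ the insertion of $\mathcal{Q}^{(1)}_k$ is legitimate, the $k\ll l$ piece of the unfolding vanishes for support reasons, and the diagonal $k\sim l$ is handled by H\"older, Khintchine and the scalar bounds for \eqref{op:tp}. The genuine gap is at the main step. After Cauchy--Schwarz in $k$ you must bound
\begin{align*}
\Big\| \sum_{l\in\Z} (Q^{(1)}_{2,l}P^{(2)}_{5,k}F_2)\,\mathcal{Q}^{(1)}_k\big[(P^{(2)}_{6,l}F_3)\,F_4\big]\Big\|_{\L^{p_1'}(\ell^2_k)},
\end{align*}
and the claim that this ``reduces, after a Littlewood--Paley inequality, to a sum over $l$ of twisted paraproducts'' is not justified and does not go through as stated. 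The two scales stay interlocked through $F_2$, which carries $Q^{(1)}_{2,l}$ on its first fiber and the low-pass $P^{(2)}_{5,k}$ on its second. The only other $k$-dependence is the inserted $\mathcal{Q}^{(1)}_k$, but it acts on $(P^{(2)}_{6,l}F_3)F_4$ \emph{before} the multiplication by the $F_2$-factor, so the summand is not of the form $\mathcal{Q}^{(1)}_k[\cdots]$ and no Littlewood--Paley inequality in $k$ can be applied to it; freezing $k$ and invoking \eqref{op:tp} (scalar, Marcinkiewicz--Zygmund, or Grafakos--Martell) only produces $\ell^\infty_k$ information, via a maximal function through $P^{(2)}_{5,k}$, never the required $\ell^2_k$; and square-summation in $l$ is blocked because $P^{(2)}_{6,l}$ on $F_3$ is low-pass. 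In other words, once $F_1$ is removed the remaining quantity is still a genuinely bi-parameter trilinear object of the same nature as \eqref{form2-bis}, so the proposed reduction begs the question. Your suggested remedy, expanding $P_{5,k}=cI+\sum_{j\le k}Q_{5,j}$ followed by ``a further routine $(j,k)$-splitting'', reintroduces a third scale and lands you back in a sum of exactly the same kind; that splitting is where the whole difficulty lives and is not routine.

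This is also where you misidentify the role of the structural hypotheses $\sigma_{i,k}=\rho_{i,k}-\rho_{i,k-1}$: in the paper they are not there to decompose the low-pass pieces, but to enable the telescoping identity \eqref{telescoping}, which is the actual mechanism for $k\gg l$. The paper telescopes twice, first in $l$ and then in $k$, thereby swapping $Q$- and $P$-type projections; the resulting main term \eqref{main3} has precisely the structure of Lemma \ref{lemma:tri-1} after interchanging $F_1\leftrightarrow F_3$ and the two fibers (which is where the hypothesis $1/p_2+1/p_3>1/2$ comes from), while the boundary terms \eqref{tel:boundary}, \eqref{error3}, \eqref{twistederror} are one-parameter sums handled by \eqref{op:tp}-type and Khintchine arguments. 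Simply mirroring the proof of Lemma \ref{lemma:tri-1} with the roles of the scales exchanged cannot work, because the dominant scale now sits on $F_2$ through a $P$-type rather than a $Q$-type operator; some substitute for this summation by parts is indispensable.
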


\begin{proof}[Proof of Lemma \ref{lemma:tri-2}]
Let us augment the definitions in the statement of the lemma by setting $Q_{i,k}$ and $P_{i,k}$ to be Fourier multipliers with the respective symbols   $\sigma_{i,k}$ and $\rho_{i,k}$
for any $i\in \{1,2,5,6\}$. 
  Observe that because of the  condition on the bump functions     we have  the telescoping identity 
 \begin{align}
\sum_{k=k_0}^{k_1} P_{i,k} f \, {Q}_{j,k} g + \sum_{k=k_0}^{k_1}   Q_{i,k} f\, {P}_{j,k-1} g  = P_{i,{k_1}} f\, {P}_{j,{k_1}} g - P_{i,{k_0-1}} f\, {P}_{j,{k_0-1}} g \label{telescoping}
\end{align}
for any   $f,g\in \L^{1}_{\textup{loc}}(\R)$ and $i,j\in \{1,2,5,6\}$. Also note 
   that letting $k_0\rightarrow -\infty$ and  $k_1\rightarrow \infty$, the right-hand side becomes the pointwise product $fg$.
   
Our goal is to deduce the claim from  Lemma \ref{lemma:tri-1} by two applications of the   telescoping identity \eqref{telescoping}.  
We write the   sum over $k\gg l$ in \eqref{form2-bis} as 
$\sum_{k\in \Z}\sum_{l\in \Z: l\ll k}  $
and use  \eqref{telescoping} in $l$. This gives  
\begin{align}
 \label{tel:main}
\eqref{form2-bis} = &- \sum_{(k,l)\in \Z^2: k\gg l}  (    Q^{(1)}_{1,k} F_1)\, ( P^{(1)}_{2,l-1}P^{(2)}_{5,k}F_2)\, ( Q^{(2)}_{6,l} F_3)  \\
& +\sum_{k\in \Z}   
(    Q^{(1)}_{1,k} F_1)\, ( P^{(1)}_{2,k-49}P^{(2)}_{5,k}F_2)\, ( P^{(2)}_{6,k-49} F_3)    \label{tel:boundary}
\end{align}
 Indeed, \eqref{tel:boundary} is the boundary term at $l=k-49$, while the term at $-\infty$ vanishes. 
 
Let us   consider  \eqref{tel:boundary}.
We dualize it and write the corresponding form up to a constant as
\begin{align*} \nonumber
 \sum_{k\in \Z}   \int_{\R^2}  \mathcal{Q}^{(1)}_k \Big( (    Q^{(1)}_{1,k} F_1)\, ( P^{(1)}_{2,k-49}P^{(2)}_{5,k}F_2) \Big )\, ( P^{(2)}_{6,k-49} F_3)\, F_4 ,
\end{align*}
where  $ \mathcal{Q}_k$ has a symbol adapted to $[-2^{k+12},2^{k+12}]$ which vanishes on $[-2^{k-12},2^{k-12}]$. Note that by the support assumption on $\sigma_{2,k}$ we necessary have that $\rho_{2,k}(0)$ is the same constant $c_0\in \R$ for each $k\in \Z$. 
Then we write  $P_{2,k-49} = c_0I+(P_{2,k-49}-c_0I)$ and split  the operator accordingly.  By the  frequency support information in the  first fibers  it suffices to bound    the terms
\begin{align}\label{erroryy}
& \sum_{k\in \Z}   \int_{\R^2}  c_0\, \mathcal{Q}^{(1)}_k \Big( (    Q^{(1)}_{1,k} F_1)\, ( P^{(2)}_{5,k}F_2) \Big )\, ( P^{(2)}_{6,k-9} F_3)\, F_4  \quad \textup{and}\\
&  \sum_{k\in \Z}  \int_{\R^2}     \mathcal{Q}^{(1)}_k \Big( (    Q^{(1)}_{1,k} F_1)\, ( \widetilde{\mathcal{Q}}^{(1)}_{k}P^{(2)}_{5,k}F_2) \Big )\, ( P^{(2)}_{6,k-9} F_3)  \, F_4,\label{errory}
\end{align}
where the symbol of    $\widetilde{ \mathcal{Q}}_k$ is adapted in $[-2^{k+100},2^{k+100}]$ and  vanishes at the  origin.  
The desired bounds for the first term  \eqref{erroryy} follow from  
\begin{align}\nonumber
&\Big | \sum_{k\in \Z}   \int_{\R^2}   (    Q^{(1)}_{1,k} F_1)\, ( P^{(2)}_{5,k}F_2) \,\mathcal{Q}^{(1)}_k \Big(  ( P^{(2)}_{6,k-49} F_3) \, F_4  \Big ) \Big |\\ \label{lemmaproof}
& \leq   \|  (    Q^{(1)}_{1,k} F_1)\, ( P^{(2)}_{5,k}F_2) \|_{\L^r(\ell^2_k)}    \Big  \|   \mathcal{Q}^{(1)}_k \Big(  ( P^{(2)}_{6,k-49} F_3) \, F_4  \Big ) \Big \|_{\L^s(\ell^2_k)} 
\end{align}
whenever $1/r+1/s=1$ and  $1<r,s<\infty$. 
For the first term in \eqref{lemmaproof} we use H\"older's inequality to obtain 
\begin{align*}
  \|  (    Q^{(1)}_{1,k} F_1)\, ( P^{(2)}_{5,k}F_2) \|_{\L^r(\ell^2_k)}  \leq  \|   Q^{(1)}_{1,k} F_1  \|_{\L^{p_1}(\ell^2_k)}   \| P^{(2)}_{5,k} F_2\|_{\L^{p_2}(\ell^\infty_k)}  \lesssim_{p_1,p_2} \| F_1\|_{\L^{p_1}(\R^2)}  \| F_2\|_{\L^{p_2}(\R^2)}. 
\end{align*}
whenever 
$1/r=1/p_1+1/p_2$,   $1<p_1,p_2<\infty$.  For the last inequality we have used bounds on the maximal and square functions.
For the second term in \eqref{lemmaproof} we   use  Kintchine's inequality  to linearize the square-sum.  Then we apply bounds for the corresponding operator \eqref{op:tp}, yielding
$$ \Big  \|   \mathcal{Q}^{(1)}_k \Big(  ( P^{(2)}_{6,k-49} F_3) \, F_4  \Big ) \Big \|_{\L^s(\ell^2_k)}  \lesssim_{p_3,p_4} \|F_3\|_{\L^{p_3}(\R^2)} \|F_4\|_{\L^{p_4}(\R^2)}  $$
whenever 
$1/s=1/p_3+1/p_4$,  $1<p_3<\infty$, and $2<p_4<\infty$.
This yields the desired estimate.

 For   \eqref{errory} we proceed in the analogous way, this time estimating  $ \widetilde{\mathcal{Q}}^{(1)}_k P^{(2)}_{5,k} F_2$ by the Hardy-Littlewood maximal function.

It remains to estimate \eqref{tel:main}, for which we still want to switch the projections in $k$. 
Now we write   the double  sum  over $k\gg l$ as 
$\sum_{l\in \Z}\sum_{k\in \Z: k\gg l}  $
and telescope \eqref{tel:main} in $k$, giving  
\begin{align}\label{main3}
 \eqref{tel:main}  = &  \sum_{(k,l)\in\Z^2:k\gg l}   (    P^{(1)}_{1,k-1} F_1)\, ( P^{(1)}_{2,l-1}Q^{(2)}_{5,k}F_2)\, ( Q^{(2)}_{6,l} F_3)  , \\ 
 \label{error3}
 &+ \sum_{l\in \Z}   (    P^{(1)}_{1,l+51} F_1)\, ( P^{(1)}_{2,l-1}P^{(2)}_{5,l+51}F_2)\, ( Q^{(2)}_{6,l} F_3)  ,\\
 \label{twistederror}
 &- \sum_{l\in \Z} (    F_1)\, ( P^{(1)}_{2,l-1}F_2)\, ( Q^{(2)}_{6,l} F_3)  .
\end{align}
The   term \eqref{twistederror} is the boundary term at $k=\infty$, which is just a pointwise product of $F_1$ with an operator of the form \eqref{op:tp}. We obtain $\L^{p_1}\times \L^{p_2}\times \L^{p_3}\rightarrow \L^{p_4'}$ estimates in the range $1<p_1,p_2,p_3,p_4<\infty$, $1/p_2+1/p_3>1/2$, whenever the exponents satisfy the  H\"older scaling.
 The  term \eqref{main3} follows from Lemma \ref{lemma:tri-1} and symmetry considerations, i.e. after interchanging the role of $F_1$ and $F_3$ and the role of the first and second fibers. We obtain estimates in the range $1<p_1,p_2,p_3<\infty$, $2<p_4<\infty$, $1/p_2+1/p_3>1/2$.
 Finally, note  that  
\begin{align}\label{errorx}
\eqref{error3}  = &  \sum_{l\in \Z}   (    P^{(1)}_{1,l+51} F_1)\, ( P^{(1)}_{2,l-1}P^{(2)}_{5,l-49}F_2)\, ( Q^{(2)}_{6,l} F_3) \\ \label{errorx1}
 + &\sum_{l\in \Z}   (    P^{(1)}_{1,l+51} F_1)\, ( P^{(1)}_{2,l-1}\mathcal{Q}^{(2)}_{l}F_2)\, ( Q^{(2)}_{6,l} F_3) ,
\end{align}
where  the symbol of  $\mathcal{Q}_l$  is up to a constant  adapted in $[-2^{l+51},2^{l+51}]$ and it vanishes near the origin.  The term \eqref{errorx1} is bounded by H\"older's inequality and bounds on the maximal and square functions in the full range. 
For  \eqref{errorx}
one can proceed analogously as for  \eqref{tel:boundary}, with the roles of the first and second fiber interchanged.
This gives bounds 
in the range  $1<p_1, p_2,p_3<\infty$, $2<p_4<\infty$ and H\"older scaling.
\end{proof}

Now we are ready to prove Lemma \ref{mainprop-tri}.
\begin{proof}
[Proof of Lemma \ref{mainprop-tri}]
 For simplicity of notation, we shall   assume that $P_{1,k}=P_{2,k}=P_k$ and $Q_{1,k}=Q_{2,k}=Q_k$, but as it will be clear from the proof, the arguments will work for general Fourier multiplier operators with symbols satisfying the above assumptions.  

The proof  will be based on analyzing the ordering of the parameters $k,l$, and $m$. Note that by symmetry we may assume $k\leq l\leq m$.
 If, in addition,   it holds that $k\sim l$ and $l\sim m$, then all frequencies are comparable up to an absolute constant and  the claim follows by H\"older's inequality.  

Consider now the case with two dominating frequencies when  $k\ll l$ and $l\sim m$, i.e. $l=m+s$ for some $-50\leq s\leq 0$. 
Then we need to estimate   
$$ \sum_{m\in \Z}\; \sum_{-50\leq s \leq 0}  \sum_{\substack{k\in \Z:\\  k\ll m+s}}\;(Q^{(1)}_{k} P^{(2)}_{m}  F_1)\,  (Q^{(1)}_{m+s}  P^{(2)}_{k} F_2)\, (Q^{(1)}_{m}  P^{(2)}_{m+s} F_3)  .$$
By Cauchy-Schwarz in $m$ and H\"older's inequality, its $\L^{p_4'}$ norm is bounded by 
\begin{align*}
   & \sum_{-50\leq s \leq  0}  \Big \|   \sum_{\substack{k\in \Z:\\  k\ll m+s}}  (Q^{(1)}_{k}P^{(2)}_{m}F_1) (Q^{(1)}_{m+s}P^{(2)}_{k}F_2) \,  \Big \|_{\L^{p'_3}(\ell^2_m)}  \|  Q^{(1)}_{m}P^{(2)}_{m+s}F_3  \|_{\L^{p_3}(\ell^2_m)}  
   \end{align*}
whenever   $1<p_3<\infty$   and $1/p_4'=1/p_3+1/p_3'$. For a fixed $s$, the second term is a square function, bounded in the full range.  For the first term we use Kintchine's inequality, which reduces to showing  
\begin{align*}
\Big \|  \sum_{|m|\leq M} a_m   \sum_{\substack{k\in \Z:\\  k\ll m+s}}  (Q^{(1)}_{k}P^{(2)}_{m}F_1) (Q^{(1)}_{m+s}P^{(2)}_{k}F_2) \Big \|_{\L^{p'_3}(\R^2)} \lesssim_{p_1,p_2} \|F_1\|_{\L^{p_1}(\R^2)}\|F_2\|_{\L^{p_2}(\R^2)}
\end{align*}
uniformly in $M>0$, where $|a_m|\leq 1$. 
This estimate holds when $1<p_1,p_2,p_3,p_4<\infty$, $1/p_1+1/p_2=1/p_3'$, $1/p_1+1/p_2>1/2$. Indeed, this follows from Theorem \ref{thm:doubletwist} after  normalizing the symbols of $P_m$ and $Q_{m+s}$.  
In the end, it remains to sum in $-50\leq s\leq 0$.   

Now we consider the $k\leq l\ll m$ with  one dominating frequency.  We dualize and consider the corresponding trilinear form.  
By the frequency localizations it suffices to bound 
\begin{align*}
\sum_{(k,l,m)\in \Z^3: k\leq l\ll m} \int_{\R^2} (Q^{(1)}_{k} P^{(2)}_{m}  F_1) \,  (Q^{(1)}_{l}  P^{(2)}_{k} F_2) \, (Q^{(1)}_{m}  P^{(2)}_{l} F_3) \, (\mathcal{Q}_{m}^{(1)} \mathcal{P}_{m}^{(2)}F_4 ),
\end{align*}
where $\mathcal{Q}_{m}$, $\mathcal{P}_{m}$ are adapted to $[-2^{m+6}, 2^{m+6}]$ and $\mathcal{Q}_m$ vanishes in $[-2^{m-6}, 2^{m-6}]$. As in the proof of Lemma \ref{lemma:tri-2}, we note that 
$\varphi_m(0)=c_0$ is a constant with $|c_0|\leq 1$ for each  $m\in \Z$.
 We write  $P_m = c_0I + (P_m -c_0I)$ for the projection acting on the second fiber of $F_1$. 
Then this reduces to  
\begin{align*}
\sum_{(k,l,m)\in \Z^3: k\leq l\ll m}   \mathcal{M}_{k,l,m} \quad +\quad  \sum_{(k,l,m)\in \Z^3: k\leq l\ll m}   \mathcal{E}_{k,l,m} ,
\end{align*}
where we have set
\begin{align}\label{main2}
  \mathcal{M}_{k,l,m} &= \int_{\R^2}  (Q^{(1)}_{k}    F_1) \,  (Q^{(1)}_{l}  P_k^{(2)} F_2) \, (Q^{(1)}_{m}  P^{(2)}_{l} F_3) \, (\mathcal{Q}_{m}^{(1)} \mathcal{P}_{m}^{(2)}F_4 ), \\\label{error2}
   \mathcal{E}_{k,l,m} &= \int_{\R^2}  (Q^{(1)}_{k} \widetilde{\mathcal{Q}}^{(2)}_{m}   F_1) \,  (Q^{(1)}_{l}    P_k^{(2)}F_2) \, (Q^{(1)}_{m}  P^{(2)}_{l} F_3) \, (\mathcal{Q}_{m}^{(1)} \mathcal{P}_{m}^{(2)}F_4 ).
\end{align}
Here  $\widetilde{\mathcal{Q}}_{m}$ is associated with a bump function, which is up to a constant multiple, adapted to  $[-2^{m+10},2^{m+10}]$. Moreover, the bump function vanishes at  the origin. We have redefined $\mathcal{P}_m$ by subsuming $c_0$ into its definition.

First we consider \eqref{main2}.  We write   the sum over $(k,l,m)\in \Z^3$ with $k \leq l \ll m$ as 
\begin{align}
\label{splitsum1}
\sum_{(k,l,m)\in \Z^3: k\leq l\ll m}   \mathcal{M}_{k,l,m}  &= \Big ( \sum_{(k,l,m)\in \Z^3: k\leq l\ll m}   \mathcal{M}_{k,l,m}  \, - \sum_{(k,l,m)\in \Z^3: k\leq l}    \mathcal{M}_{k,l,m}  \Big)\\
& \quad  +\quad  \sum_{(k,l,m)\in \Z^3: k\leq l}   \mathcal{M}_{k,l,m}.  \label{splitsum2} 
\end{align}
Let us first focus on    \eqref{splitsum2}.
We split the summation into the sums over $m$ and $(k,l)$, apply  Cauchy-Schwarz in $m$ and H\"older's inequality in the integration. This yields
\begin{align}\nonumber
&\Big | \sum_{(k,l,m)\in \Z^3:k\leq l}    \mathcal{M}_{k,l,m}  \Big |\\ &\leq  \Big\|  \sum_{(k,l)\in \Z^2:k\leq l}(Q^{(1)}_{k}    F_1) \,  (Q^{(1)}_{l}  P_k^{(2)} F_2) \, (Q^{(1)}_{m}  P^{(2)}_{l} F_3) \Big \|_{\L^{p'_4}(\ell^2_m)}  \| \mathcal{Q}_{m}^{(1)} \mathcal{P}_{m}^{(2)}F_4    \|_{\L^{p_4}(\ell^2_m)}.\label{main-unconstr}
\end{align}
 Since the second term is a square function, it remains  to prove $\L^{p_1}(\R^2) \times \L^{p_2}(\R^2) \times \L^{p_3}(\ell^2) \rightarrow \L^{p_4'}(\ell^2)$ vector-valued estimates for  the trilinear operator 
\begin{align}\label{form2-ter}
\sum_{(k,l) \in \Z^2:k\leq l} (Q^{(1)}_{k}    F_1) \,  (Q^{(1)}_{l}  P_k^{(2)} F_2) \, ( P^{(2)}_{l} F_3).     
\end{align}
These estimates will again follow by freezing the functions $F_1$ and $F_2$ and using   Marcinkiewicz-Zygmund inequalities, provided we show  scalar-valued boundedness of \eqref{form2-ter}. 
To show  this, we split the summation in \eqref{form2-ter} into regions where $k\sim l$, $k\ll l$, or $k\gg l$. 
The case   $k\sim l$    is bounded by H\"older's inequality. The remaining two cases  follow by Lemmas \ref{lemma:tri-1} and \ref{lemma:tri-2} above. Summarizing, we obtain   estimates for \eqref{main-unconstr} when  $1<p_1,p_2,p_3<\infty$, $2<p_4<\infty$,  $1/p_1+1/p_2 >1/2$, $1/p_2+1/p_3>1$, and $\sum_{i=1}^4 1/p_i =1$.

It remains to estimate \eqref{splitsum1}, which splits further into several terms. We fix $k\leq l$ and then we consider several cases depending on the size of $m$ relative to $k$ and $l$.  
If for any triple  $(j_1,j_2,j_3)\in \{k,l,m\}^3$ with all three distinct entries
it holds $j_1\sim j_2$ and $j_2\sim j_3$, then the claim follows by H\"older's inequality.  If $l\gg \max(k,m)$, then the corresponding term is zero due to frequency supports. 
Therefore, the remaining two cases with $k\leq l$ are:   $k\sim l$ and $m\ll k$, or  $k\ll l$ and $m\sim l$.

 Consider first the case when $k\sim l$ and $m\ll k$. Then we have two $Q$-type projections   in each parameter. Bounds are    deduced by two applications of H\"older's inequality as follows. Taking the triangle inquality, enlarging the sum in $m$ to be over the whole $\Z$ and applying  H\"older's inequality in $l$, we obtain 
\begin{align*}
& \Big| \sum_{-50\leq s\leq  0}  \sum_{l\in \Z} \sum_{m\in \Z: m\ll l+s} \mathcal{M}_{l+s,l,m} \Big|\\
& \leq \sum_{-50\leq s\leq  0} \int_{\R^2} \sum_{m\in \Z} \|Q^{(1)}_{l+s}F_1\|_{\ell^2_l}      \,  \|Q^{(1)}_{l}  P_{l+s}^{(2)} F_2\|_{\ell^2_l} \, \|Q^{(1)}_{m}  P^{(2)}_{l} F_3\|_{\ell^\infty_l} \, |\mathcal{Q}_{m}^{(1)} \mathcal{P}_{m}^{(2)}F_4 |
\end{align*}
By Cauchy-Schwarz in $m$   and H\"older in the integration,  we bound the term for a fixed $s$ by
\begin{align}\label{sqmax}
  \|Q^{(1)}_{l+s}F_1\|_{\L^{p_1}(\ell^2_l)}      \,  \|Q^{(1)}_{l}  P_{l+s}^{(2)} F_2\|_{\L^{p_2}(\ell^2_l)} \, \|Q^{(1)}_{m}  P^{(2)}_{l} F_3\|_{\L^{p_3}(\ell^2_m(\ell^\infty_l))} \, \|\mathcal{Q}_{m}^{(1)} \mathcal{P}_{m}^{(2)}F_4 \|_{\L^{p_4}(\ell^2_m)}.
\end{align}
Each of the terms is bounded in full range.
The third term,  square-maximal function term is bounded in by  the Fefferman-Stein inequality, see \cite{mptt:bi}. The remaining terms are square functions.

Next we consider the case $k\ll l$ and $m\sim l$. We split 
  $$\sum_{\substack{(k,l,m)\in \Z^3: \\  k\ll l, m\sim l}}   \mathcal{M}_{k,l,m} = \sum_{\substack{(k,l, m)\in \Z^3:\\ m\sim l}}    \mathcal{M}_{k,l,m}\quad - \sum_{\substack{(k,l, m)\in \Z^3:\\
l-50\leq k \leq l+100, m\sim l}}    \mathcal{M}_{k,l,m}  \quad - \sum_{\substack{(k,l, m)\in \Z^3: \\
 k\gg l+50,m\sim l}}    \mathcal{M}_{k,l,m}.  $$
Note that bounds for the second sum  follow by H\"older's inequality, while the third sum is zero due to frequency supports. Thus, it remains to consider the case $m\sim l$.  We again split the sum over $(k,l)\in \Z^2$ and write  $m=l+s$,              $s\sim 0$.
Now we use H\"older's inequality in $l$ to bound  
\begin{align}\nonumber
 & \sum_{s\sim 0} \Big |   \sum_{(k,l)\in \Z^2}   \mathcal{M}_{k,l,l+s}  \Big |\\
& \leq \sum_{s\sim 0}  \Big\|\sum_{k\in\Z} (Q^{(1)}_{k}   F_1) \,(Q_l^{(1)}P_{k}^{(2)} F_2) \Big     \|_{\L^{r}(\ell^2_l)} \|Q_{l+s}^{(1)}P_l^{(2)}F_3\|_{\L^{p_3}(\ell^2_l)}  \|Q_{l+s}^{(1)}P_{l+s}^{(2)}F_3\|_{\L^{p_4}(\ell^\infty_l)}\label{trilin-last}
\end{align}  
where ${1/r+1/p_3+1/p_4} =1$.  The second and third term are maximal and square functions, respectively, while 
bounds for the first  term    follow by  $\L^p(\R^2)\times \L^q(\ell^2) \rightarrow \L^r(\ell^2)$ vector-valued estimates for the operator \eqref{op:tp}.  Summing in $s$, we obtain estimates in the range     $1<p_1,p_2,p_3,p_4<\infty$,  $1/p_1+1/p_2 >1/2$, and  $\sum_{i=1}^4 1/p_i =1.$

To bound \eqref{error2}, we proceed analogously as for \eqref{main2}, except that  in the analogue of \eqref{form2-ter} we    use  $\L^{p_1}(\ell^2) \times \L^{p_2}(\R^2) \times \L^{p_3}(\ell^2) \rightarrow \L^{p_4}(\ell^2)$ vector-valued estimates for that operator and in the display  analogous to  \eqref{sqmax}   we now have the maximal-square function 
$$ \| Q^{(1)}_{l+s}  \widetilde{\mathcal{Q}}^{(2)}_{m}   F_1\|_{\L^{p_1}(\ell^\infty_m(\ell^2_l))},$$
which is also bounded in the full range by the Fefferman-Stein inequality.
The analogue for the vector-valued estimates used in \eqref{trilin-last} are now 
$\L^p(\ell^2)\times \L^q(\ell^2) \rightarrow \L^r(\ell^2)$ vector-valued estimates for the operator of the form \eqref{op:tp}.   
 \end{proof}

Finally, we are ready to tackle  the operators that result after the cone decomposition.  Recall that we need to prove bounds for $U_1$ and $U_2$ under the assumptions that for $1\leq i \leq 3$,
  $\psi_{i,k}$   is adapted to $[-2^{k+1},2^{k+1}]$, and for $4\leq j \leq 6$,
$\varphi_{j,k}$  is  adapted to $[-2^{k+1},2^{k+1}]$. Moreover, each $\psi_{i,k}$ vanishes on $[-2^{k-1},2^{k-1}]$ and $\varphi_{j,k}(0)=\varphi_{j,0}(0)$.
\subsection{Completing the proof of Theorem \ref{thm2}}
Let $\phi_{k}$ be a function adapted in $[-2^{k+1},2^{k+1}]$, which satisfies $\phi_{k}(0)=\phi_{0}(0)$ for each $k\in \Z$. 
We write 
\begin{align}
\label{addsub}
\phi_{k}=c_0^{-1} (c_0\phi_{k} - \widetilde{\phi}_{k}) + c_0^{-1}\widetilde{\phi}_{k},
\end{align}
where $\widetilde{\phi}_{k}$ is constantly equal to $c_0\phi_{0}(0)$ on  $[-2^{k-1},2^{k-1}]$, and  $|c_0|\leq 1$ is such that  $\widetilde{\phi}_{k}$ is a   function adapted in $[-2^{k+1},2^{k+1}]$.  Note that the function $c_0\phi_{k} - \widetilde{\phi}_{k}$ vanishes at the origin.

Consider $U_1$ and $U_2$ which resulted from the cone decomposition.
Writing each of the symbols of the $P$-type operators as in \eqref{addsub}, it suffices to bound   the operators under the following assumptions. For each $1\leq i\leq 3$,   $\psi_{i,k}$ is adapted in $[-2^{k+1},2^{k+1}]$ and vanishes on $[-2^{k-1},2^{k1}]$. 
For each $4\leq j \leq 5$,  $\varphi_{j,k}$ is supported in $[-2^{k+1}, 2^{k+1}]$  and exactly
one of the following holds: \begin{enumerate}
\item For each $4\leq j \leq 6$, the function $\varphi_{j,k}$ is constantly equal to $\varphi_{j,0}(0)$ on $[-2^{k-1},2^{k-1}]$.
\item There is an index   $4\leq j_0 \leq 6$, such that $\varphi_{j_0,k}(0)=0$ for each $k$. Moreover, for $j\neq j_0$, 
$\varphi_{j,k}$ is constantly equal to $\varphi_{j,0}(0)$ on $[-2^{k-1},2^{k-1}]$.
\item There is an index $4\leq j_0 \leq 6$, such that $\varphi_{j_0,k}$ is constantly equal to $\varphi_{j_0,0}(0)$ on $[-2^{k-1},2^{k-1}]$. Moreover,   for $j\neq j_0$, it holds $\varphi_{j,k}(0)=0$ for each $k$.
\item For each $4\leq j \leq 6$, it holds $\varphi_{j,k}(0)=0$ for each $k$.
\end{enumerate}

 We will analyze each of these cases for the operators of the form $U_1$ and $U_2$ and 
we start with $U_1$. Our first aim is to reduce considerations to the case when the symbols $\psi_{i,k}$ are supported in $[-2^{k+3}, 2^{k+3}]$, vanish on $[-2^{k-3}, 2^{k-3}]$, and are of the form $\varphi_{i,k}-\varphi_{i,k-1}$ for a function $\varphi_{i,k}$ supported in $[-2^{k+3},2^{k+3}]$.
The argument that follows is along the lines of an argument used in Section 6 of  \cite{vk:tp} when transitioning from the dyadic to the continuous setting.

Let us  denote by $P_{\varphi}$ the one-dimensional Fourier multiplier with symbol $\varphi$, i.e. $P_\varphi  f = f*\widecheck{\varphi}$. As before we shall denote its fiber-wise action on a two-dimensional function with a superscript. 
Let $\phi$ be a non-negative smooth function supported in $[-2^{-0.4},2^{-0.4}]$ and 
 such that ${\phi}$ is constantly equal to one on   $[-2^{-0.6},2^{-0.6}]$.  For $a\in \R$   define $\vartheta_a$ and $\rho_a$ by
\begin{align*}
& {\vartheta_a}(\xi) = {\phi}(2^{-a-1}\xi)-{\phi}(2^{-a}\xi),\\
& {\rho_a}(\xi) = {\phi}(2^{-a-0.6}\xi) - {\phi}(2^{-a-0.5}).
\end{align*}
Note that ${\vartheta_a}$ is supported in 
$[-2^{a+0.6},2^{a+0.6}]$  and vanishes on $[-2^{a-0.6},2^{a-0.6}]$.
Moreover, ${\vartheta_a}$ is constantly equal to one on $[2^{a-0.4},2^{a+0.4}]$ and $[-2^{a+0.4},-2^{a-0.4}]$.
Finally, $\rho_a$ is supported in 
$[-2^{a+0.2},2^{a+0.2}]$  and vanishes on $[-2^{a-0.1},2^{a-0.1}]$.
 In particular, we have ${\vartheta_a} =1$ on the support of  ${\rho_a}$. Moreover, for $k\in \Z$ and $1\leq i \leq 3$ we have 
\begin{align}\label{rho-prop}
&{\textstyle \sum_{l=-20}^{20}}\,  {\rho_{k+0.1l}} =1\quad \textup{on} \quad \textup{supp}({\psi_{i,k}}),\\\label{rho-prop1}
& \textstyle{\sum_{l=-20}^{20}}\, {\rho_{k+0.1l}} =0\quad \textup{on} \quad \textup{supp}({\psi_{i,k'}}) \quad \textup{if} \quad |k'-k|\geq 10.
\end{align}
 Let $n\in \Z$, $0\leq s \leq 9$. 
 Note that due to \eqref{rho-prop1} and \eqref{rho-prop} we may write for $\xi,\eta\in \R$
\begin{align*}
\sum_{k\in\Z}   \psi_{i,k}(\xi)\varphi_{j,k}(\eta) &=  \sum_{s=0}^9 \sum_{n\in \Z}   {\psi_{i,10n+s}}  (\xi) \varphi_{j,10n+s}(\eta)\\
&= \sum_{s=0}^9   \sum_{l=-20}^{20}\sum_{n\in \Z}  \rho_{10n+s+0.1l} (\xi) \Psi_{i,s}(\xi) \varphi_{j,10n+s}(\eta),
\end{align*} 
 where ${\Psi_{i,s}} = \sum_{k\in \Z}  \psi_{i,10k+s}$.  
 
Let us now consider $U_1$ with bump functions satisfying (Q) and (P). 
Applying the above considerations in each parameter $k,l,m$, it suffices to prove bounds for   
\begin{align*}
\sum_{(n_1,n_2,n_3)\in \Z^3}  \Big( ({P}^{(1)}_{\rho_{10n_1+s_1+0.1l_1}} {P}^{(2)}_{\varphi_{4,10n_3+s_3}}  G_{1,s_1})\, 
& ({P}^{(1)}_{\rho_{10n_2+s_2+0.1l_2}}  P^{(2)}_{\varphi_{5,10n_1+s_1}} {G}_{2,s_2})\\
& \qquad \qquad  ({P}^{(1)}_{\rho_{10n_3+s_3+0.1l_3}} P^{(2)}_{\varphi_{6,10n_2+s_2}} {G}_{3,s_3}) \Big )
\end{align*}
for fixed $0\leq s_1,s_2,s_3\leq 9$ and $-20\leq l_1,l_2,l_3\leq 20$, where $G_{i,s} = P^{(1)}_{\Psi_{i,s}} {F}_i$. The Mikhlin-H\"ormander theorem in one variable gives
$$\|G_{i,s}\|_{\L^{p_i}(\R^2)}  = \|P^{(1)}_{\Psi_{i,s}}{F}_i\|_{\L^{p_i}(\R^2)} \lesssim_s \|{F}_i\|_{\L^{p_i}(\R^2)}.$$
Now, for  $b\in \R$ and $0\leq s\leq 9$  define
${G}_{i,s,b} = \sum_{n\in \Z} P^{(1)}_{\rho_{10n+s+b}}   G_{i,s}.$ 
By   support considerations,
$$P^{(1)}_{\vartheta_{k+b}}{G}_{i,s,b}
 = P^{(1)}_{\rho_{10n+s+b}}{G}_{i,s} \quad \textup{if}\quad k=10n+s\in 10\Z+s$$ and  $P^{(1)}_{\vartheta_{k+b}}{G}_{i,s,b}=0$   if $k\not\in 10n+s$.   
 The Littlewood-Paley inequality in one variable gives
\begin{align*}
\|G_{i,s,b}\|_{\L^{p_i}(\R^2)}\lesssim_{p_i,s,b} \|G_{i,s}\|_{\L^{p_i}(\R^2)}.
\end{align*}
Thus, it suffices to bound
\begin{align}\label{U1-finalreduction}
\sum_{(k_1,k_2,k_3)\in \Z^3}  ({P}^{(1)}_{\vartheta_{k_1+b_1}} {P}^{(2)}_{\varphi_{4, k_3}}  G_{1,s_1,b_1})\,  ({P}^{(1)}_{\vartheta_{k_2+b_2}}  P^{(2)}_{\varphi_{5,k_1}} G_{2,s_2,b_2})\, ({P}^{(1)}_{\vartheta_{k_3+b_3}} P^{(2)}_{\varphi_{6,k_2}} G_{3,s_3,b_3})
\end{align}
for each fixed $b_i=0.1l_i$,   $-20\leq l_i\leq 20$ and $0\leq s_i\leq 9$, $1\leq i\leq 3$.

Note that $\vartheta_b$ is supported in $[-2^3,2^3]$ and vanishes on $[-2^{-3},2^{-3}]$. Moreover, $(\vartheta_b)_k$ equals $(\phi_{b})_k - (\phi_{b})_{k-1}$, where $\phi_b=\phi(2^{-b-1}\cdot)$ is supported in  $[-2^3,2^3]$.

Now we are ready to distinguish further cases depending on the form of $\varphi_{j,k}$.

{\bf Case (1) of $U_1$.} 
Bounds for \eqref{U1-finalreduction}   follow from  Lemma~\ref{mainprop-tri}, applied with ${\psi}_{i,k}$ being a constant multiple of $\vartheta_{k+b_i}$ for $1\leq i \leq 3$.

{\bf Case (2) of $U_1$.} Without loss of generality, we may assume $j_0=4$. 
Then we dualize and write the quadrilinear form in question up to a constant as 
\begin{align*}
\int_{\R^2} \sum_{(k,m)\in \Z^2} (Q^{(1)}_{1,k} P^{(2)}_{4,m}  F_1)(x) \mathcal{Q}^{(1)}_{1,k} \Big( \sum_{l\in \Z} 
 (Q^{(1)}_{2,l}  P^{(2)}_{5,k} F_2)(x)\, (Q^{(1)}_{3,m}  P^{(2)}_{6,l} F_3)(x)  F_4(x)\Big)  dx,
\end{align*}
where $\mathcal{Q}_{1,k}$ is a constant multiple of a function adapted in $[-2^{4},2^{4}]$, which   vanishes on $[-2^{-4},2^{-4}]$.  By Cauchy-Schwarz in $k,m$, and H\"older's inequality in the integration,  it suffices to bound a fiber-wise square function and 
\begin{align*}
\Big \|\Big( \sum_{k\in \Z}\Big| \mathcal{Q}^{(1)}_{1,k} \sum_{l\in \Z}  
 (Q^{(1)}_{2,l}  P^{(2)}_{5,k} F_2)(x)\, (Q^{(1)}_{3,m}  P^{(2)}_{6,l} F_3)(x)  F_4(x)\Big|^2\Big)^{1/2} \Big \|_{\L^{p_1'}(\ell^2_m)}.
\end{align*}
Dualizing with a function $\widetilde{F}_1$ and using Kintchine's inequality we see that it suffices to prove $\L^{p_1}\times \L^{p_2}\times \L^{p_3}\rightarrow \L^{p_4'}$ bounds for 
the operator
\begin{align*}
 \sum_{(k,l)\in \Z}a_k (\mathcal{Q}^{(1)}_{1,k} \widetilde{F}_1)(x)
 (Q^{(1)}_{2,l}  P^{(2)}_{5,k} F_2)(x)\, ( P^{(2)}_{6,l} F_3)(x) .
\end{align*}
Now we perform the analogous averaging argument as the one that led to  \eqref{U1-finalreduction}. Then it suffices to consider the case when $a_k=1$ and $\mathcal{Q}_{1,k}$ has a symbol   supported in $[-2^{9}, 2^{9}]$, which vanishes on $[-2^{-9}, 2^{-9}]$, and   is of the form $\theta_k - \theta_{k-1}$ for a suitable function $\theta$. The desired bounds then  follow by Lemmas \ref{lemma:tri-1} and \ref{lemma:tri-2}.

{\bf Case (3) of $U_1$.} Without loss of generality,  we may  assume $j_0=6$.  Then we  write the operator in question as 
\begin{align*}
\sum_{(k,m)\in \Z^2} (Q^{(1)}_{1,k} P^{(2)}_{4,m}  F_1)(x) \Big( \sum_{l\in \Z} 
 (Q^{(1)}_{2,l}  P^{(2)}_{5,k} F_2)(x)\, (Q^{(1)}_{3,m}  P^{(2)}_{6,l} F_3)(x) \Big).
\end{align*}
By the Cauchy-Schwarz inequality in $k$ and $m$, this reduces to vector-valued estimates for the operator \eqref{op:tp} and a fiber-wise square function. We obtain $\L^{p_1}\times \L^{p_2}\times \L^{p_3}\rightarrow \L^{p_4'}$ estimates with H\"older scaling and in the range $1<p_1,p_2,p_3,p_4<\infty$, $1/p_2+1/p_3>1/2$.

{\bf Case (4) of $U_1$.} This case easily follows from three applications of H\"older's inequality, reducing to fiber-wise square functions. \\

It remains to treat $U_2$.  By the analogous argument as leading to \eqref{U1-finalreduction},   it suffices to consider the case when for $1\leq i \leq 3$, the symbols $\psi_{i,k}$ are supported in $[-2^{k+3}, 2^{k+3}]$, vanish on $[-2^{k-3}, 2^{k-3}]$, and are of the form $\varphi_{i,k}-\varphi_{i,k-1}$ for a function $\varphi_{i,k}$ supported in $[-2^{k+3},2^{k+3}]$. Then we distinguish further cases depending on the form of $\varphi_{j,k}$.

{\bf Case (1) of $U_2$.} 
Let $\widetilde{Q}_{5,k}$ and $\widetilde{P}_{1,k}$ be  associated with $\widetilde{\psi}_{5,k}$ and $\widetilde{\varphi}_{1,k}$, respectively, where  
$$\widetilde{\psi}_{5,k}=\varphi_{5,k}-\varphi_{5,k-1}$$  and 
$\widetilde{\varphi}_{1,k}=\varphi_{1,k-1}$ (with $\varphi_{1,k}$ defined in \eqref{psiphicond}). Observe that $\widetilde{\psi}_{5,k}$ is supported in $[-2^{k+3},2^{k+3}]$ and vanishes on $[-2^{k-3},2^{k-3}]$. Moreover,  $\widetilde{\varphi}_{1,k}$ is supported in $[-2^{k+4},2^{k+4}]$ and observe  that $\widetilde{\varphi}_{1,k}-\widetilde{\varphi}_{1,k-1} = \varphi_{1,k-1}-\varphi_{1,k-2}=\psi_{1,k-1}$ is supported in $[-2^{k+4},2^{k+4}]$ and vanishes  on $[-2^{k-4},2^{k-4}]$. 
An application of the telescoping identity    \eqref{telescoping}   in $k$ yields that $U_2$ then equals   
 \begin{align} \label{tel1}
 & - \sum_{(k,l,m)\in \Z^3} (\widetilde{Q}^{(1)}_{5,k} P^{(2)}_{4,m}  F_1)\,  (Q^{(1)}_{2,l}  \widetilde{P}^{(2)}_{1,k} F_2)\, (Q^{(1)}_{3,m}  P^{(2)}_{6,l} F_3) \\\label{tel2}
&+   \sum_{(l,m)\in \Z^2} (P^{(2)}_{4,m}  F_1)\,  (Q^{(1)}_{2,l}   F_2)\, (Q^{(1)}_{3,m}  P^{(2)}_{6,l} F_3)
\end{align}
Bounds for the first term follow from Lemma \ref{mainprop-tri}, applied with $\psi_{1,k}$ being $\widetilde{\psi}_{5,k}$ and $\varphi_{5,k} = \widetilde{\varphi}_{1,k}$. 
The desired bounds for the second term   follow from  Lemmas \ref{lemma:tri-1} and   \ref{lemma:tri-2}, and by H\"older's inequality used for the portion of the sum when $l\sim m$.

{\bf Case (2) of $U_2$.} 
Let $j_0=6$. Performing the analogous steps as in Case (2) of $U_1$, we see that it suffices to show    estimates for 
\begin{align*}
\sum_{(k,m)\in \Z^2} a_m ({P}^{(1)}_{5,k} P^{(2)}_{4,m}  F_1)\,  ({Q}^{(2)}_{1,k}    {F}_2)\, (\mathcal{Q}^{(1)}_{3,m}    \widetilde{F}_3) ,
\end{align*}
where $\mathcal{Q}_{3,m}  $ is as in Case (2) of $U_1$. 
By an averaging argument (as leading to \eqref{U1-finalreduction}) it suffices to consider the case when  $a_k=1$ and $\mathcal{Q}_{3,m}$ has a symbol   supported in $[-2^{9}, 2^{9}]$, which vanishes on $[-2^{-9}, 2^{-9}]$, and  is of the form $\theta_k - \theta_{k-1}$ for a suitable function $\theta$.  The clam then follows by the telescoping identity \eqref{telescoping} in $m$,  Lemmas \ref{lemma:tri-1} and \ref{lemma:tri-2}, and bounds for \eqref{op:tp}.
If $j_0=5$, we can  proceed as in Case (2) of $U_1$. 
Then we need to show estimates for
\begin{align*}
\sum_{(l,m)\in \Z^2} a_l ( P^{(2)}_{4,m}  F_1)\,  (\mathcal{Q}^{(1)}_{2,l}    \widetilde{F}_2)\, (Q^{(1)}_{3,m}   P_{6,l}^{(2)} F_3) .
\end{align*}
  If $j_0=4$,  we  first use the telescoping identity \eqref{telescoping} in $k$, giving   terms of the form \eqref{tel1} and \eqref{tel2}. 
For   \eqref{tel2}, we apply the Cauchy-Schwarz inequality in $m$, which leads to  vector-valued estimates for the operator \eqref{op:tp}.
For \eqref{tel1}, we can  proceed   as in Case (2) of $U_1$, up to obvious modifications.

{\bf Cases (3) and (4) of $U_2$.} 
These two cases are   analogous to  Cases (3) and (4) of $U_1$.  \qed

\end{document}